\newtheorem{thm}{Theorem}
\newtheorem{lem}{Lemma}[section]
\newtheorem{prop}[lem]{Proposition}
\newtheorem{rem}{Remark}
\newtheorem*{thrm}{Theorem}
\newcommand{\nablab}{\overline{\nabla}}
\newcommand{\nablat}{\widetilde{\nabla}}
\newcommand{\lgra}{\longrightarrow}
\newcommand{\iid}{\mathrm{Id}\,}
\newcommand{\eend}{\mathrm{End}\,}
\newcommand{\trace}{\mathrm{tr\,}}
\newcommand{\lto}{\ensuremath{\longrightarrow}}
\newcommand{\C}{\mathbb{C}}
\newcommand{\HH}{\mathbb{H}}
\newcommand{\R}{\mathbb{R}}
\newcommand{\Mc}{\mathbb{M}^{4}(c)}
\newcommand{\A}{\mathcal{A}}
\newcommand{\pre}{\Re e}
\newcommand{\function}[5]
{\begin{eqnarray*}\begin{array}{r@{}ccl}
 #1\;\colon\;  & #2 &\lto & #3 \\[.05cm]
  & #4 &\longmapsto  & #5
\end{array}\end{eqnarray*}
}
\newcommand{\beqt}{\begin{equation}}  \newcommand{\eeqt}{\end{equation}}
\newcommand{\bal}{\begin{align}}      \newcommand{\eal}{\end{align}}
\newcommand{\ba}{\begin{array}}      \newcommand{\ea}{\end{array}}
\newcommand{\bc}{\begin{center}}     \newcommand{\ec}{\end{center}}
\newcommand{\be}{\begin{enumerate}}  \newcommand{\ee}{\end{enumerate}}
\newcommand{\beq}{\begin{eqnarray}}  \newcommand{\eeq}{\end{eqnarray}}
\newcommand{\beQ}{\begin{eqnarray*}} \newcommand{\eeQ}{\end{eqnarray*}}
\newcommand{\bi}{\begin{itemize}}    \newcommand{\ei}{\end{itemize}}
\newcommand{\bt}{\begin{tabular}}    \newcommand{\et}{\end{tabular}}
\newcommand{\finpreuve}{\hfill\square\\}
\newcommand{\thmrm}[1]{\text{\emph{#1}}}
\title{Spinorial Representation of Surfaces into 4-dimensional Space Forms}
\author{Pierre Bayard\footnote{bayard@ifm.umich.mx},  Marie-Am\'elie Lawn\footnote{mlawn@math.utexas.edu}  and  Julien Roth\footnote{julien.roth@univ-mlv.fr}}
\date{}
\begin{document}
\maketitle

\begin{abstract}
In this paper we give a geometrically invariant spinorial representation of surfaces in four-dimensional space forms. In the Euclidean space, we obtain a representation formula which generalizes the Weierstrass representation formula of minimal surfaces. We also obtain as particular cases the spinorial characterizations of surfaces in $\R^3$ and in $S^3$ given by T. Friedrich and by B. Morel. 
\end{abstract}
{\it keywords:} Dirac Operator, Isometric Immersions, Weierstrass Representation.\\\\
\noindent
{\it 2000 Mathematics Subject Classification:} 53C27, 53C40.

\date{}
\maketitle\pagenumbering{arabic}
\section{Introduction}
The Weierstrass representation describes a conformal minimal immersion of a Riemann surface $M$ into the three-dimensional Euclidean space $\R^3$. Precisely, the immersion is expressed using two holomorphic functions $f,g:M\longrightarrow \C$ by the following integral formula
$$(x_1,x_2,x_3)=\pre\left(\int f(1-g^2)dz,\int if(1+g^2)dz,\int 2fgdz\right):M\lgra\R^3.$$
On the other hand, the spinor bundle $\Sigma M$ over $M$ is a two-dimensional complex vector bundle splitting into
$$\Sigma M=\Sigma^+M\oplus\Sigma^-M=\Lambda^{0}M\oplus\Lambda^{0,1}M.$$
Hence, a pair of holomorphic functions $(g,f)$ can be considered as a spinor field $\varphi=(g,fdz)$. Moreover, the Cauchy-Riemann equations satisfied by $f$ and $g$ are equivalent to the Dirac equation
$$D\varphi=0.$$
This representation is still valid for arbitrary surfaces. In the general case, the functions $f$ and $g$ are not holomorphic and the Dirac equation becomes 
$$D\varphi=H\varphi,$$
where $H$ is the mean curvature of the immersion. This fact is well-known and has been studied in the last years by many authors (see \cite{Ko,KS,Ta,Ta2}). \\
\indent In \cite{Fr}, T. Friedrich gave a geometrically invariant spinorial representation of surfaces in $\R^3$. This approach was generalized to surfaces of other three-dimensional spaces  \cite{Mo,Ro} and also in the pseudo-Riemannian case \cite{La,LR}.\\
\indent
The aim of the present paper is to extend this approach to the case of codimension $2$ and then provide a geometrically invariant representation of surfaces in the $4$-dimensional space form $\Mc$ of sectional curvature $c$  by spinors solutions of a Dirac equation. 
\section{Preliminaries}
\subsection{The fundamental theorem of surfaces in $\Mc$}
Let $(M^2,g)$ be an oriented surface isometrically immersed into the four-dimensional space form $\Mc$. Let us denote by $E$ its normal bundle and by $B:TM\times TM\lgra E$ its second fundamental form defined by
$$B(X,Y)=\overline{\nabla}_XY-\nabla_XY,$$
where $\nabla$ and $\nablab$ are the Levi-Civita connections of $M$ and $\Mc$ respectively. For $\xi\in \Gamma(E)$, the shape operator associated to $\xi$ is defined by
$$S_{\xi}(X)=-\left(\overline{\nabla}_{X}\xi\right)^T,$$
for all $X\in\Gamma(TM),$ where the upper index $T$ means that we take the component of the vector tangent to $M$. Then, the following equations hold:
\be
\item $K=\left<B(e_1,e_1),B(e_2,e_2)\right>-|B(e_1,e_2)|^2+c, $  (Gauss equation) 
\item $K_N=-\left<(S_{e_3}\circ S_{e_4}-S_{e_4}\circ S_{e_3})(e_1),e_2\right> ,$ (Ricci equation) 
\item  $(\nabla^N_XB)(Y,Z)-(\nabla^N_YB)(X,Z)=0,$ (Codazzi equation)
\ee
where $K$ and $K_N$ are the curvatures of $(M,g)$ and $E,$ $(e_1,e_2)$ and $(e_3,e_4)$ are orthonormal and positively oriented bases of $TM$ and $E$ respectively, and where $\nabla^N$ is the natural connection induced on the normal bundle $T^*M^{\otimes 2}\otimes E$. Reciprocally, there is the following theorem:
\begin{thrm}[Tenenblat \cite{Te}]
Let $(M^2,g)$ be a Riemannian surface and $E$ a vector bundle of rank $2$ on $M,$ equipped with a metric $\langle.,.\rangle$ and a compatible connection. We suppose that $M$ and $E$ are oriented. Let $B:TM\times TM\lgra E$ be a bilinear map satisfying the Gauss, Ricci and Codazzi equations above, where, if $\xi\in E,$ the shape operator $S_\xi:TM\rightarrow TM$ is the symmetric operator such that
$$g\left( S_{\xi}(X),Y\right)=\langle B(X,Y),\xi\rangle$$
for all $X,Y\in TM.$ Then, there exists a local isometric immersion $V\subset M\lgra \Mc$ so that $E$ is identified with the normal bundle of $M$ into $\Mc$ and with $B$ as second fundamental form. 
\end{thrm}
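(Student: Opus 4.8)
The plan is to realize $TM\oplus E$ -- enlarged by a trivial line bundle when $c\neq 0$ -- as the restriction to $M$, along the immersion we want to build, of the tangent bundle of $\Mc$ (resp. of an ambient flat space), equipped with a connection assembled from $\nabla$, $\nabla^N$ and $B$; then recover the immersion from a parallel trivialization of that bundle.

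Case $c=0$. On $F:=TM\oplus E$ with the bundle metric $g\oplus\langle\cdot,\cdot\rangle$ put the connection
$$\widehat\nabla_X(Y+\xi):=\bigl(\nabla_XY-S_\xi X\bigr)+\bigl(\nabla^N_X\xi+B(X,Y)\bigr),\qquad X,Y\in\Gamma(TM),\ \xi\in\Gamma(E).$$
One checks $\widehat\nabla$ is metric, and a computation decomposing $R^{\widehat\nabla}(X,Y)$ into its $TM$- and $E$-components shows that $R^{\widehat\nabla}=0$ is \emph{equivalent} to the three equations above (Gauss and Codazzi give the vanishing of the part $TM\to TM\oplus E$, Codazzi and Ricci the part $E\to TM\oplus E$). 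Shrinking to a simply connected neighbourhood $V$, there is a $\widehat\nabla$-parallel orthonormal frame, i.e.\ a bundle isometry $\Phi\colon F|_V\to V\times\R^4$ with $d\circ\Phi=\Phi\circ\widehat\nabla$. Define the $\R^4$-valued $1$-form $\alpha(X):=\Phi(X)$ for $X\in TM$; using torsion-freeness of $\nabla$ and symmetry of $B$ one gets $\widehat\nabla_XY-\widehat\nabla_YX=[X,Y]$, hence $d\alpha(X,Y)=\Phi\bigl(\widehat\nabla_XY-\widehat\nabla_YX-[X,Y]\bigr)=0$, so $\alpha=dF$ on a possibly smaller $V$. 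Since $dF=\Phi|_{TM}$ is a pointwise linear isometry, $F$ is an isometric immersion, and $\Phi$ identifies $E$ with its normal bundle, $\nabla^N$ with the induced normal connection, and (reading off $\widehat\nabla_XY=\nabla_XY+B(X,Y)$) the given $B$ with the second fundamental form of $F$.

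Case $c\neq 0$. Write $\Mc=\{x:\langle x,x\rangle_\epsilon=1/c\}\subset\R^5_\epsilon$, where $\R^5_\epsilon=\R^5$ if $c>0$ and $\R^5_\epsilon=\R^{4,1}$ if $c<0$. Enlarge $F$ to $\widetilde F:=TM\oplus E\oplus\underline{\R}$, with unit section $e$ of the last summand normalized so that $\langle e,e\rangle=1/c$ (timelike when $c<0$), and put on $\widetilde F$ the connection $\nablat$ equal to $\widehat\nabla$ on $TM\oplus E$ together with
$$\nablat_XY=\nabla_XY+B(X,Y)-c\,g(X,Y)\,e,\qquad \nablat_Xe=X,\qquad X,Y\in\Gamma(TM).$$
Again a curvature computation shows $R^{\nablat}=0$ if and only if the Gauss, Codazzi and Ricci equations hold -- the terms in $c$ being exactly what is needed to absorb the curvature constant of the ambient space into the flatness of $\nablat$. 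On a simply connected $V$ choose a $\nablat$-parallel frame, i.e.\ a bundle isometry $\Phi\colon\widetilde F|_V\to V\times\R^5_\epsilon$, and set $F:=\Phi\circ e\colon V\to\R^5_\epsilon$. Then $dF(X)=\Phi(\nablat_Xe)=\Phi(X)$; since $\langle F,F\rangle_\epsilon=\langle e,e\rangle_{\widetilde F}=1/c$ and $\langle dF(X),F\rangle_\epsilon=\langle X,e\rangle_{\widetilde F}=0$, the map $F$ takes values in $\Mc$ and is an isometric immersion there, and $\Phi$ identifies $E$ with its normal bundle in $\Mc$, $\nabla^N$ with its normal connection, and $B$ with its second fundamental form (the term $-c\,g(X,Y)e$ being the component of $\nablat_XY$ normal to $\Mc$ in $\R^5_\epsilon$, i.e.\ the second fundamental form of $\Mc\subset\R^5_\epsilon$).

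The routine parts are the metric compatibility of $\widehat\nabla$ and $\nablat$ and, once flatness is known, checking that $F$ restricts to an isometric immersion carrying the prescribed invariants. The one genuine computation -- and the crux of the argument -- is the expansion of $R^{\widehat\nabla}(X,Y)$ (resp. $R^{\nablat}(X,Y)$) on $TM$, on $E$ and on $e$, collecting tangential and normal parts and recognizing the resulting identities as precisely the Gauss, Codazzi and Ricci equations. I expect the bookkeeping of the mixed $TM\!\leftrightarrow\!E$ terms, governed by $S$, $B$ and their covariant derivatives, to be the main obstacle.
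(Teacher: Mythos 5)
Your argument is correct, and it is the classical proof of the fundamental theorem of submanifolds: a flat metric connection on $TM\oplus E$ (augmented by a trivial line bundle carrying the position vector when $c\neq 0$), a parallel orthonormal frame on a simply connected neighbourhood, and integration of the tautological $1$-form. Note, however, that the paper does not prove this statement at all: it is imported from Tenenblat's article, whose original argument (moving frames and the Frobenius theorem) is essentially the frame-bundle version of what you wrote. The only proof-like material the paper itself offers is the remark following Theorem \ref{theorem second integration}, valid for $c=0$ only: one first solves $\nabla_X\varphi=\eta(X)\cdot\varphi$ with $\eta(X)=-\frac12\sum_j e_j\cdot B(e_j,X)$, whose integrability conditions are shown in the proof of Proposition \ref{proptothm1} to be exactly the Gauss, Codazzi and Ricci equations, and then integrates the closed $\HH$-valued form $\xi(X)=\langle\langle X\cdot\varphi^-,\varphi^+\rangle\rangle$. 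The two routes are parallel: your $\widehat\nabla$-parallel frame of $TM\oplus E$ inside $V\times\R^4$ is replaced there by a spinor field parallel up to $\eta$, and your tautological form $\Phi|_{TM}$ by $\xi$; yours is more elementary, self-contained, and treats all three values of $c$ uniformly via the cone construction, while the spinorial route is the one the paper actually needs for its representation formula and is only spelled out in the Euclidean case. Two small points in your write-up: the phrase ``$\nablat$ equal to $\widehat\nabla$ on $TM\oplus E$'' is contradicted by your displayed formula for $\nablat_XY$, which carries the extra term $-c\,g(X,Y)e$; the display is the correct definition and should take precedence. And since the statement asks for an identification of the \emph{oriented} bundle $E$ with the normal bundle, you should choose the parallel frame positively oriented, which costs nothing.
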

\subsection{Twisted spinor bundle}
Let $(M^2,g)$ be an oriented Riemannian surface, with a given spin structure, and $E$ an oriented and spin vector bundle of rank 2 on $M$. We consider the spinor bundle $\Sigma$ over $M$ twisted by $E$ and defined by
$$\Sigma=\Sigma M\otimes\Sigma E.$$
We endow $\Sigma$ with the spinorial connection $\nabla$ defined by
$$\nabla=\nabla^{\Sigma M}\otimes\iid_{\Sigma E}+\iid_{\Sigma M}\otimes\nabla^{\Sigma E}.$$
We also define the Clifford product $\cdot$ by
$$\left\{\begin{array}{l} X\cdot\varphi=(X\cdot_{_M}\alpha)\otimes\overline\sigma\quad\text{if}\ X\in\Gamma(TM)\\ \\
X\cdot\varphi=\alpha\otimes(X\cdot_{_E}\sigma)\quad\text{if}\ X\in\Gamma(E)
\end{array}
\right.$$
for all $\varphi=\alpha\otimes\sigma\in\Sigma M\otimes\Sigma E,$ where $\cdot_{_M}$ and $\cdot_{_E}$ denote the Clifford products on $\Sigma M$ and on $\Sigma E$ respectively and where $\overline{\sigma}=\sigma^+-\sigma^-.$ We finally define the Dirac operator $D$ on $\Gamma(\Sigma)$ by
$$D\varphi=e_1\cdot\nabla_{e_1}\varphi+e_2\cdot\nabla_{e_2}\varphi,$$
where $(e_1,e_2)$ is an orthonormal basis of $TM.$
\\

We note that $\Sigma$ is also naturally equipped  with a hermitian scalar product $\langle.,.\rangle$ which is compatible to the connection $\nabla$, since so are $\Sigma M$ and $\Sigma E$, and thus also with a compatible real scalar product $\Re e\langle.,.\rangle.$ We also note that the Clifford product $\cdot$ of vectors belonging to $TM\oplus E$ is antihermitian with respect to this hermitian product. Finally, we stress that the four subbundles $\Sigma^{\pm\pm}:=\Sigma^{\pm}M\otimes\Sigma^{\pm}E$ are orthogonal with respect to the hermitian product. Throughout the paper we will assume that the hermitian product is $\C-$linear w.r.t. the first entry, and $\C-$antilinear w.r.t. the second entry.
\subsection{Spin geometry of surfaces in $\Mc$}
It is a well-known fact (see \cite{Ba,HZ}) that there is an identification between the spinor bundle $\Sigma\Mc_{|M}$ of $\Mc$ over $M,$  and the spinor bundle of $M$ twisted by the normal bundle $\Sigma:=\Sigma M\otimes\Sigma E$. Moreover, we have the spinorial Gauss formula: for any $\varphi\in\Gamma(\Sigma)$ and any $X\in TM$,
$$\nablat_X\varphi=\nabla_X\varphi+\frac{1}{2}\sum_{j=1,2}e_j\cdot B(X,e_j) \cdot\varphi,$$
where $\nablat$ is the spinorial connection of $\Sigma\Mc$ and $\nabla$ is the spinoral connection of $\Sigma$ defined by
$$\nabla\ =\ \nabla^{\Sigma M}\otimes\iid_{\Sigma E}\ +\ \iid_{\Sigma M}\otimes\nabla^{\Sigma E}.$$
Here $\cdot$ is the Clifford product on $\Mc.$ Therefore, if $\varphi$ is a Killing spinor of $\Mc$, that is satisfying
$$\widetilde{\nabla}_X\varphi=\lambda X\cdot \varphi, $$
where the Killing constant $\lambda$ is $0$ for the Euclidean space, $\pm\frac{1}{2}$ for the sphere and $\pm\frac{i}{2}$ for the hyperbolic space, that is, $4\lambda^2=c$, then its restriction over $M$ satisfies
\beqt\label{eqkilling0}
\nabla_X\varphi=-\frac{1}{2}\sum_{j=1,2}e_j\cdot B(X,e_j) \cdot\varphi+\lambda X\cdot \varphi.
\eeqt
Taking the trace in \eqref{eqkilling0}, we obtain the following Dirac equation
\beqt\label{eqdirac}
D\varphi=\vec{H}\cdot\varphi-2\lambda\varphi,
\eeqt
where we have again $D\varphi=\sum_{j=1}^2e_j\cdot \nabla_{e_j}\varphi$ and $\vec H=\frac{1}{2}\sum_{j=1}^2B(e_j,e_j)$ is the mean curvature vector of $M$ in $\Mc.$
\\

Let us consider $\omega_4=-e_1\cdot e_2\cdot e_3\cdot e_4$. We recall that $\omega_4^2=1$ and $\omega_4$ has two eigenspaces for eigenvalues $1$ and $-1$ of same dimension. We denote by $\Sigma^+$ and $\Sigma^-$ these subbundles. They decompose as follows:
$$
\left\{\begin{array}{ll}
\Sigma^+=(\Sigma^+M\otimes\Sigma^+E)\oplus(\Sigma^-M\otimes\Sigma^-E)\\ \\
\Sigma^-=(\Sigma^+M\otimes\Sigma^-E)\oplus(\Sigma^-M\otimes\Sigma^+E),
\end{array}
\right.
$$
where $\Sigma^{\pm}M$ and $\Sigma^{\pm}E$ are the spaces of half-spinors for $M$ and $E$ respectively. In the sequel, for $\varphi\in\Sigma,$ we will use the following convention:
$$\varphi=\varphi^{++}+\varphi^{--}+\varphi^{+-}+\varphi^{-+},$$
with 
$$\left\{
\begin{array}{l}
\varphi^{++}\in\Sigma^{++}:=\Sigma^+M\otimes\Sigma^+E,\\ 
\varphi^{--}\in\Sigma^{--}:=\Sigma^-M\otimes\Sigma^-E,\\
\varphi^{+-}\in\Sigma^{+-}:=\Sigma^+M\otimes\Sigma^-E,\\ 
\varphi^{-+}\in\Sigma^{-+}:=\Sigma^-M\otimes\Sigma^+E.
\end{array}\right.$$
Finally, we set
$$\varphi^+=\varphi^{++}+\varphi^{--}\quad\text{and}\quad\varphi^-=\varphi^{+-}+\varphi^{-+}.$$
If $\varphi$ is a Killing spinor of $\Mc,$ an easy computation yields
$$X|\varphi^+|^2=2\pre\langle \lambda X\cdot\varphi^{-},\varphi^{+}\rangle\hspace{1cm}\mbox{and}\hspace{1cm}X|\varphi^-|^2=2\pre\langle \lambda X\cdot\varphi^{+},\varphi^{-}\rangle.$$
\section{Main result}
\begin{thm}\label{thm1}\label{corollary first integration}
Let $(M^2,g)$ be an oriented Riemannian surface, with a given spin structure, and $E$ an oriented and spin vector bundle of rank $2$ on $M$. Let $\Sigma=\Sigma M\otimes\Sigma E$ be the twisted spinor bundle. Let $\lambda$ be a constant belonging to $\R\cup i\R$ and let $\vec{H}$ be a section of $E$. Let further $D$ be the Dirac operator of $\Sigma$. Then the three following statements are equivalent:
\begin{enumerate}
\item There exists a spinor $\varphi\in\Gamma(\Sigma)$ solution of the Dirac equation
\begin{equation}\label{dirac equation cor}
D\varphi=\vec{H}\cdot\varphi-2\lambda\varphi
\end{equation}
such that $\varphi^+$ and $\varphi^-$ do not vanish and satisfy
\begin{equation}\label{condphi+-}
X|\varphi^+|^2=2\pre\langle \lambda X\cdot\varphi^{-},\varphi^{+}\rangle\hspace{.5cm}\mbox{and}\hspace{.5cm}X|\varphi^-|^2=2\pre\langle \lambda X\cdot\varphi^{+},\varphi^{-}\rangle.
\end{equation}
\item There exists a spinor $\varphi\in\Gamma(\Sigma)$ solution of 
$$\nabla_X\varphi=-\frac{1}{2}\sum_{j=1,2}e_j\cdot B(X,e_j) \cdot\varphi+\lambda X\cdot\varphi, $$
where $B:TM\times TM\lgra E$ is bilinear and $\frac{1}{2}\trace(B)=\vec{H}$ and such that $\varphi^+$ and $\varphi^-$ do not vanish.
\item There exists a local isometric immersion of $(M,g)$ into $\Mc$ with normal bundle $E$, second fundamental form $B$ and mean curvature $\vec{H}$.
\end{enumerate}
The form $B$ and the spinor field $\varphi$ are linked by (\ref{B function phi}).
\end{thm}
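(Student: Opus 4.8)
\pf The plan is to prove the cycle $(3)\Rightarrow(2)\Rightarrow(1)\Rightarrow(2)\Rightarrow(3)$; the links $(3)\Rightarrow(2)$ and $(2)\Rightarrow(1)$ merely unwind Section~2, while $(1)\Rightarrow(2)$ and $(2)\Rightarrow(3)$ carry the content. For $(3)\Rightarrow(2)$ I would start from an isometric immersion of $(M,g)$ into $\Mc$ with normal bundle $E$ and second fundamental form $B$ (so $\frac12\trace B=\vec H$), and use the identification $\Sigma\Mc_{|M}\simeq\Sigma=\Sigma M\otimes\Sigma E$ together with the spinorial Gauss formula of Section~2.3 to restrict to $M$ a Killing spinor $\varphi$ of $\Mc$ with constant $\lambda$, $4\lambda^2=c$: by construction $\varphi$ satisfies \eqref{eqkilling0}, which is exactly the equation in $(2)$ for this $B$. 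To secure $\varphi^{\pm}\neq0$ I would choose the Killing spinor $\varphi$ so that $\varphi^{+}(p_0)\neq0$ and $\varphi^{-}(p_0)\neq0$ at a fixed point $p_0$ --- in the flat case a generic constant spinor does the job, and otherwise one uses that $\Mc$ carries no nonzero Killing spinor of pure chirality --- and this persists on a neighbourhood of $p_0$, which is enough since the statement is local.

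For $(2)\Rightarrow(1)$ I would take the trace of the equation in $(2)$: with $e_j\cdot e_j=-1$ and $B$ symmetric this gives $D\varphi=\vec H\cdot\varphi-2\lambda\varphi$, i.e.\ \eqref{dirac equation cor}. For the norm conditions \eqref{condphi+-} I would use that Clifford multiplication by a vector of $TM$ or of $E$ exchanges $\Sigma^{+}$ and $\Sigma^{-}$, so multiplication by $e_j\cdot B(X,e_j)$ preserves each of $\Sigma^{\pm}$; projecting the equation in $(2)$ gives $\nabla_X\varphi^{+}=-\frac12\sum_j e_j\cdot B(X,e_j)\cdot\varphi^{+}+\lambda X\cdot\varphi^{-}$, and since $e_j$ and $B(X,e_j)$ are orthogonal the operator $e_j\cdot B(X,e_j)$ is antihermitian, so differentiating $|\varphi^{+}|^2$ kills the $B$-terms and leaves $X|\varphi^{+}|^2=2\pre\langle\lambda X\cdot\varphi^{-},\varphi^{+}\rangle$, symmetrically for $\varphi^{-}$. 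This is the ``easy computation'' mentioned in Section~2.3, and $\varphi^{\pm}\neq0$ is inherited.

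The heart is $(1)\Rightarrow(2)$. Given $\varphi$ solving \eqref{dirac equation cor} with $\varphi^{\pm}\neq0$ and \eqref{condphi+-}, I would set $\tau_X:=\nabla_X\varphi-\lambda X\cdot\varphi$, observe that \eqref{dirac equation cor} is equivalent to $\sum_j e_j\cdot\tau_{e_j}=\vec H\cdot\varphi$, and then \emph{define} a bilinear map $B:TM\times TM\to E$ from $\varphi$ by an explicit pointwise formula --- this is \eqref{B function phi}, obtained by pairing $\tau_X$ with the spinors $\xi\cdot e_i\cdot\varphi$ ($\xi\in E$, $i=1,2$) and normalising by $|\varphi|^2$. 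The work is then to verify that $B$ is well defined, that it is symmetric, and --- the main obstacle --- that $\varphi$ genuinely solves the equation in $(2)$ with this $B$, i.e.\ $\tau_X=-\frac12\sum_j e_j\cdot B(X,e_j)\cdot\varphi$ for every $X$ (the identity $\frac12\trace B=\vec H$ then follows from $\varphi\neq0$ and the formula for $\sum_j e_j\cdot\tau_{e_j}$). A priori $\tau_X$ might carry a component outside the image of the linear map $B'\mapsto-\frac12\sum_j e_j\cdot B'(X,e_j)\cdot\varphi$; I expect \eqref{condphi+-} to be precisely the vanishing of that component, and the full hypothesis $\varphi^{+},\varphi^{-}\neq0$ --- strictly stronger than $\varphi\neq0$, and needed here because $\mathrm{rk}\,E=2$ --- to be what makes the pairing defining $B$ lose no information. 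This step is the codimension-two counterpart of the arguments of Friedrich~\cite{Fr} and Morel~\cite{Mo}.

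For $(2)\Rightarrow(3)$ I would invoke the fundamental theorem of surfaces in $\Mc$ (Tenenblat, Section~2.1), so that it suffices to produce the Gauss, Ricci and Codazzi equations for $B$. I would differentiate the equation in $(2)$ and feed in $(\nabla_X\nabla_Y-\nabla_Y\nabla_X-\nabla_{[X,Y]})\varphi=\mathcal R^{\Sigma}(X,Y)\varphi$, where $\mathcal R^{\Sigma}(X,Y)$ is Clifford multiplication by the $2$-form encoding the curvatures of $(M,g)$ and of $E$ via $\Sigma=\Sigma M\otimes\Sigma E$. The integrability condition then reads $\mathcal R^{\Sigma}(X,Y)\varphi=\mathcal Q_B(X,Y)\varphi$ for all $X,Y$, with $\mathcal Q_B(X,Y)$ Clifford multiplication by a $2$-form built from $\nabla^{N}B$, $B$ quadratically and $\lambda$ ($4\lambda^2=c$). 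Because $\varphi^{+}$ and $\varphi^{-}$ are nowhere zero, no nonzero $2$-form annihilates $\varphi$, so $\mathcal R^{\Sigma}(X,Y)=\mathcal Q_B(X,Y)$ as $2$-forms; decomposing this identity along $\Lambda^2TM$, $\Lambda^2E$ and $TM\otimes E$ yields the Gauss, Ricci and Codazzi equations respectively. Tenenblat's theorem then produces a local isometric immersion of $(M,g)$ into $\Mc$ with $E$ identified with the normal bundle, $B$ as second fundamental form and $\vec H=\frac12\trace B$ as mean curvature vector. Of the whole argument the hard part is showing, in $(1)\Rightarrow(2)$, that $\tau$ lies in the image of the $B$-map --- this is exactly where \eqref{condphi+-} is used --- whereas the curvature bookkeeping in $(2)\Rightarrow(3)$, though lengthy, is routine once the $\varphi^{\pm}\neq0$ cancellation is granted. $\square$
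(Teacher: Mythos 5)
Your architecture coincides with the paper's: $(3)\Rightarrow(1)$ and $(2)\Rightarrow(1)$ by restricting a Killing spinor and taking traces as in Section 2.3; $(2)\Rightarrow(3)$ by computing the spinorial curvature of $\nabla_X\varphi=\eta(X)\cdot\varphi+\lambda X\cdot\varphi$, killing the resulting order-two Clifford element (your observation that no nonzero $2$-form annihilates $\varphi$ when $\varphi^{\pm}\neq0$ is exactly the paper's Lemma \ref{order2}, proved there via $\mathcal{C}l_4\simeq\HH(2)$), decomposing along $\Lambda^2TM\oplus(TM\otimes E)\oplus\Lambda^2E$, and invoking Tenenblat. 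Those three arrows are fine as you describe them.

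The genuine gap is $(1)\Rightarrow(2)$, which is the paper's Lemma \ref{lem1} and occupies all of its Section \ref{secpflem}; you have correctly isolated it as the heart of the matter but your proposed mechanism is too weak. You assert that \eqref{condphi+-} is ``precisely the vanishing'' of the component of $\tau_X=\nabla_X\varphi-\lambda X\cdot\varphi$ transverse to the image of $B'\mapsto-\frac12\sum_j e_j\cdot B'(X,e_j)\cdot\varphi$. A count shows this cannot be the whole story: for fixed $X$, \eqref{condphi+-} supplies only two real scalar equations, far fewer than the dimension of that transverse space, so the Dirac equation must contribute the rest in an essential way. The paper's actual route is to split into the \emph{four} chirality components $\varphi^{\pm\pm}\in\Sigma^{\pm}M\otimes\Sigma^{\pm}E$ (not merely $\varphi^{\pm}$), expand each $\nabla_X\varphi^{\pm\pm}$ in the orthonormal frame $\{e_1\cdot e_3\cdot\varphi^{\mp\mp}/|\varphi^{\mp\mp}|,\,e_2\cdot e_3\cdot\varphi^{\mp\mp}/|\varphi^{\mp\mp}|\}$, and extract from the Dirac equation that the discrepancy $F_+=A_{++}/|\varphi^{--}|^2-A_{--}/|\varphi^{++}|^2$ is symmetric and trace-free (Lemmas \ref{first lemma tr} and \ref{lemsym1} and their analogues); only then does \eqref{condphi+-} enter, via Lemma \ref{lem F=0}, to show the associated operator has rank at most one, which combined with symmetric and trace-free forces $F_+=0$ (and likewise $F_-=0$). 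That rank argument, the separate normalizations by $|\varphi^+|^2$ and $|\varphi^-|^2$ in \eqref{B function phi} (not by $|\varphi|^2$ as you write), and the degenerate case where one of the four components $\varphi^{\pm\pm}$ vanishes at a point, are the actual content of the step and are missing from your sketch.
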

In order to prove Theorem \ref{thm1} we consider the following equivalent technical 
\begin{prop}\label{proptothm1}
Let $M,$ $E$ and $\Sigma$ as in Theorem \ref{thm1}  and assume that there exists a spinor $\varphi\in\Gamma(\Sigma)$ solution of 
\begin{equation}\label{dirac equation}
D\varphi=\vec{H}\cdot\varphi-2\lambda\varphi
\end{equation} 
with $\varphi^+$ and $\varphi^-$ non-vanishing spinors satisfying (\ref{condphi+-}).
Then the symmetric bilinear map 
$$B:TM\times TM  \longrightarrow E$$
defined by
\begin{eqnarray}
\left<B(X,Y),\xi\right>\hspace{-.2cm}&=&\hspace{-.2cm}\frac{1}{2|\varphi^+|^2}\pre\left<X\cdot\nabla_Y\varphi^++Y\cdot\nabla_X\varphi^++2\lambda \langle X, Y\rangle\varphi^-,\xi\cdot\varphi^+\right>\nonumber\\
&&\hspace{-1cm}+\frac{1}{2|\varphi^-|^2}\pre\left<X\cdot\nabla_Y\varphi^-+Y\cdot\nabla_X\varphi^-+2\lambda \langle X, Y\rangle\varphi^+,\xi\cdot\varphi^-\right>\label{B function phi}
\end{eqnarray}
for all $X,Y\in \Gamma(TM)$ and all $\xi\in \Gamma(E)$ satisfies the Gauss, Codazzi and Ricci equations and is such that
$$\vec{H}=\frac{1}{2}\trace{B}.$$
\end{prop}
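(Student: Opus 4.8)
The right-hand side of \eqref{B function phi} is visibly $\R$-bilinear and symmetric in $(X,Y)$ and $\R$-linear in $\xi$, so, through the metric of $E$, it automatically defines a symmetric bilinear map $B\colon TM\times TM\to E$; the content of the Proposition lies in the trace identity and the three structure equations, which I would establish in three steps.

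\emph{Step 1 (trace identity).} Put $X=Y=e_i$ in \eqref{B function phi} and sum over $i$. The tangential Clifford sums $\sum_i e_i\cdot\nabla_{e_i}\varphi^{\pm}$ recombine into $D\varphi^{\pm}$; and since Clifford multiplication by a tangent or by a normal vector both reverse the $\omega_4$-grading, the Dirac equation \eqref{dirac equation} splits into its $\Sigma^{\pm}$-parts as $D\varphi^{\pm}=\vec H\cdot\varphi^{\pm}-2\lambda\varphi^{\mp}$. Substituting this, the $\lambda$-terms cancel; what remains is handled by the pointwise identity $\pre\langle\eta\cdot\psi,\xi\cdot\psi\rangle=\langle\eta,\xi\rangle|\psi|^2$ for normal vectors $\eta,\xi$ --- which follows from $\eta\cdot\xi=-\langle\eta,\xi\rangle+\eta\wedge\xi$, from $\cdot$ being antihermitian, and from the $2$-form $\eta\wedge\xi$ acting by a skew-hermitian endomorphism so that its contribution is purely imaginary --- applied to $\psi=\varphi^{\pm}$. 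This yields $\langle\trace B,\xi\rangle=2\langle\vec H,\xi\rangle$ for all $\xi$, i.e.\ $\vec H=\tfrac12\trace B$.

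\emph{Step 2 (the spinorial Gauss equation).} This is the crux. I would show that with this $B$ the spinor $\varphi$ satisfies \eqref{eqkilling0} --- which is also statement (2) of Theorem \ref{thm1}. Set $\psi_X:=\nabla_X\varphi+\tfrac12\sum_j e_j\cdot B(X,e_j)\cdot\varphi-\lambda\,X\cdot\varphi$; the goal is $\psi_X\equiv0$. Tracing $\psi$ and combining Step 1 with the Dirac equation gives $\sum_j e_j\cdot\psi_{e_j}=0$, whence $\psi_{e_2}=e_2\cdot e_1\cdot\psi_{e_1}$, so pointwise everything is carried by the single spinor $\psi_{e_1}=\psi_{e_1}^++\psi_{e_1}^-$. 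Rewriting \eqref{condphi+-} through $X|\varphi^{\pm}|^2=2\pre\langle\nabla_X\varphi^{\pm},\varphi^{\pm}\rangle$ and the skew-hermiticity of $e_j\cdot B(X,e_j)\cdot$ gives $\pre\langle\psi_X^{\pm},\varphi^{\pm}\rangle=0$; and substituting $\nabla_Y\varphi^{\pm}=\psi_Y^{\pm}-\tfrac12\sum_j e_j\cdot B(Y,e_j)\cdot\varphi^{\pm}+\lambda\,Y\cdot\varphi^{\mp}$ back into the (now tautological) relation \eqref{B function phi} and simplifying with the same $\pre\langle\eta\cdot\psi,\xi\cdot\psi\rangle$-identity turns \eqref{B function phi} into the scalar relations $|\varphi^+|^{-2}\pre\langle X\cdot\psi_Y^++Y\cdot\psi_X^+,\xi\cdot\varphi^+\rangle+|\varphi^-|^{-2}\pre\langle X\cdot\psi_Y^-+Y\cdot\psi_X^-,\xi\cdot\varphi^-\rangle=0$, one for each tangent $X,Y$ and normal $\xi$. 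The remaining step is algebraic and fibrewise: since $\varphi^+$ and $\varphi^-$ vanish nowhere, the Clifford images $v\cdot\varphi^{\pm}$ (with $v$ running over $TM\oplus E$) span $\Sigma^{\mp}$ --- because $v\cdot v\cdot\varphi^{\pm}=-|v|^2\varphi^{\pm}$ --- and one checks that the orthogonality relations just obtained are exactly enough to force $\psi_{e_1}^{\pm}=0$, hence $\psi\equiv0$. Verifying this last point --- that the relations coming from \eqref{condphi+-} and \eqref{B function phi} are the right number and point in the right directions, the weights $|\varphi^{\pm}|^{-2}$ decoupling the two chiralities --- is the main obstacle of the proof.

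\emph{Step 3 (Gauss, Codazzi, Ricci).} With \eqref{eqkilling0} in hand I would run the standard integrability argument: compute the spinorial curvature $\mathcal R(X,Y)\varphi=(\nabla_X\nabla_Y-\nabla_Y\nabla_X-\nabla_{[X,Y]})\varphi$ by differentiating \eqref{eqkilling0}, and equate it with the intrinsic expression $\mathcal R(X,Y)=\tfrac14\sum_{i,j\le 2}\langle R^M(X,Y)e_i,e_j\rangle\, e_i\cdot e_j+\tfrac14\sum_{a,b\ge 3}\langle R^E(X,Y)e_a,e_b\rangle\, e_a\cdot e_b$ for the curvature of the twisted spinorial connection $\nabla$. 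Sorting by order: the terms linear in $\nabla^N B$ must cancel among themselves, and pairing the resulting spinorial identity against the spanning Clifford translates of $\varphi^{\pm}$ yields the Codazzi equation; the zeroth-order terms balance the curvature contributions against the $B\cdot B$ contributions, and separating those according to Clifford type (tangent $2$-forms, normal $2$-forms) produces the Gauss and the Ricci equations respectively --- here one genuinely uses that \emph{both} $\varphi^+$ and $\varphi^-$ are non-zero. Together with Step 1 this is the Proposition, and Tenenblat's theorem then upgrades it to the local isometric immersion required for Theorem \ref{thm1}. Step 3 is long but essentially mechanical, the only care needed being the bookkeeping that isolates the three equations.
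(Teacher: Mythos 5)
Your architecture is sound and Steps~1 and~3 are essentially the paper's: the paper too obtains Gauss--Ricci--Codazzi by computing the curvature of the connection $\nabla_X-\eta(X)\cdot-\lambda X\cdot$ and killing the resulting order-two Clifford element with the fact that $\varphi^\pm\neq0$ (its Lemma \ref{order2}, proved in the quaternionic model $\mathcal{C}l_4\simeq\HH(2)$ --- this is exactly your ``spanning Clifford translates'' step). Where you genuinely diverge is Step~2, which replaces the paper's Lemma \ref{lem1}: the paper proves $\nabla_X\varphi=\eta(X)\cdot\varphi+\lambda X\cdot\varphi$ by expanding $\nabla\varphi^{\pm\pm}$ in the frames $e_i\cdot e_3\cdot\varphi^{\mp\mp}$ of the four sub-bundles $\Sigma^{\pm\pm}$ and showing the trace-free symmetric defects $F^\pm$ vanish, whereas you introduce the defect spinor $\psi_X$ directly and reduce everything to a fibrewise rank computation. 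Your route is conceptually cleaner, but you stop exactly at its crux (``one checks that the orthogonality relations \dots are exactly enough''), and with $8$ conditions on $8$ real unknowns there is no slack, so this cannot be waved through.

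Two things must be supplied. First, in deriving the scalar relations from \eqref{B function phi} you do not only meet the identity $\pre\langle\eta\cdot\psi,\xi\cdot\psi\rangle=\langle\eta,\xi\rangle|\psi|^2$: substituting $\nabla\varphi^\pm$ back produces cubic terms $(X\wedge e_j)\cdot B(Y,e_j)\cdot\varphi^\pm$ whose pairing with $\xi\cdot\varphi^\pm$ has a nonzero real part coming from the volume element; these survive in each chirality separately and cancel only in the weighted sum because $\omega_4$ acts as $+1$ on $\Sigma^+$ and $-1$ on $\Sigma^-$ while the weights are $|\varphi^\pm|^{-2}$. Second, the rank verification itself: using $\psi_{e_2}=e_2\cdot e_1\cdot\psi_{e_1}$ the relations from \eqref{B function phi} reduce to
$|\varphi^+|^{-2}\pre\langle\psi_{e_1}^+,e_i\cdot e_k\cdot\varphi^+\rangle+|\varphi^-|^{-2}\pre\langle\psi_{e_1}^-,e_i\cdot e_k\cdot\varphi^-\rangle=0$ for $i\in\{1,2\}$, $k\in\{3,4\}$ (the $(e_2,e_2)$ relation being minus the $(e_1,e_1)$ one, so you really only get four, not six). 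Since a $2$-form acts on $\Sigma^+$ (resp.\ $\Sigma^-$) only through its self-dual (resp.\ anti-self-dual) part, $e_1\cdot e_3$ and $e_2\cdot e_4$ act on $\Sigma^+$ by opposite operators and on $\Sigma^-$ by equal ones; taking sums and differences of the $(1,3)/(2,4)$ and $(1,4)/(2,3)$ relations therefore decouples the chiralities and yields $\pre\langle\psi_{e_1}^+,\sigma\cdot\varphi^+\rangle=0$ for $\sigma$ ranging over $1$, $e_1\cdot e_2$ and two further self-dual $2$-forms, i.e.\ over a set whose image in $\eend(\Sigma^+)\simeq\HH$ spans all of $\HH$; since $\varphi^+\neq0$ the spinors $\sigma\cdot\varphi^+$ then span $\Sigma^+$ over $\R$ and $\psi_{e_1}^+=0$, and likewise for $\psi_{e_1}^-$. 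With these two points written out your Step~2 is a complete and arguably shorter substitute for the paper's Section~4; as submitted, however, the proof has a hole precisely at the point you yourself identify as the main obstacle.
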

\noindent
\begin{rem}
If $\lambda=0,$ and if $\varphi\in\Gamma(\Sigma)$ is a solution of 
$$D\varphi=\vec{H}\cdot\varphi\hspace{1cm}\mbox{with}\hspace{1cm}|\varphi^+|=|\varphi^-|=1,$$
formula (\ref{B function phi}) simplifies to
\begin{eqnarray}
\left<B(X,Y),\xi\right>&=&\frac{1}{2}\pre\left<X\cdot\nabla_Y\varphi +Y\cdot\nabla_X\varphi,\xi\cdot\varphi\right>\nonumber\\
&=&\pre\left<X\cdot\nabla_Y\varphi,\xi\cdot\varphi\right>,\label{B phi c=0}
\end{eqnarray}
since this last expression is in fact symmetric in $X$ and $Y.$
\end{rem}
To prove proposition \ref{proptothm1} we first state the following lemma.
\begin{lem}\label{lem1}
Assume that $\varphi$ is a solution of the Dirac equation (\ref{dirac equation}) with $\varphi^+$ and $\varphi^-$ non-vanishing spinors satisfying (\ref{condphi+-}). Then, for all $X\in \Gamma(TM)$,
\begin{equation}\label{eqnnablaphi}
\nabla_X\varphi=\eta(X)\cdot\varphi+\lambda X\cdot\varphi,
\end{equation}
with 
\begin{equation}\label{relation eta B}
\eta(X)=-\frac{1}{2}\sum_{j=1}^2e_j\cdot B(e_j,X),
\end{equation}
where the bilinear map $B$ is defined by (\ref{B function phi}).
\end{lem}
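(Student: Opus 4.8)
The plan is to produce, for every $X\in\Gamma(TM)$, a $2$-vector $\eta(X)$ in $\Lambda^2(TM\oplus E)$ satisfying (\ref{eqnnablaphi}), and then to check that it automatically lies in $TM\wedge E$ and is given by (\ref{relation eta B}) with $B$ the bilinear map (\ref{B function phi}). I would begin with two structural remarks. First, since $\omega_4$ is parallel, $\nabla$ respects the splitting $\Sigma=\Sigma^+\oplus\Sigma^-$; moreover Clifford multiplication by a vector of $TM\oplus E$ interchanges $\Sigma^+$ and $\Sigma^-$, while Clifford multiplication by a $2$-vector preserves each of them. Hence (\ref{eqnnablaphi}) is equivalent to the pair $\nabla_X\varphi^\pm=\eta(X)\cdot\varphi^\pm+\lambda X\cdot\varphi^\mp$, and it suffices to find an $\eta(X)$ solving both. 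Second, using $X|\varphi^\pm|^2=2\pre\langle\nabla_X\varphi^\pm,\varphi^\pm\rangle$, hypothesis (\ref{condphi+-}) says exactly that $\pre\langle\nabla_X\varphi^+-\lambda X\cdot\varphi^-,\varphi^+\rangle=0$ and $\pre\langle\nabla_X\varphi^--\lambda X\cdot\varphi^+,\varphi^-\rangle=0$.

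To build $\eta(X)$, fix $p\in M$. For $0\ne\psi\in\Sigma_p^+$, the map $v\mapsto v\cdot\psi$ is an $\mathbb R$-linear isomorphism of $TM_p\oplus E_p$ onto $\Sigma_p^-$ (it scales lengths by $|\psi|$, and both spaces have real dimension $4$), and $\zeta\mapsto\zeta\cdot\psi$ maps $\Lambda^2(TM_p\oplus E_p)$ onto the real hyperplane $\{\chi\in\Sigma_p^+:\pre\langle\chi,\psi\rangle=0\}$ with kernel the anti-self-dual $2$-vectors; the analogous statements hold with $\Sigma^+,\Sigma^-$ and self-dual, anti-self-dual exchanged. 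By the reformulation of (\ref{condphi+-}), $\nabla_X\varphi^+-\lambda X\cdot\varphi^-$ belongs to the image of $\zeta\mapsto\zeta\cdot\varphi^+$, which fixes the self-dual part of the desired $\eta(X)$, and $\nabla_X\varphi^--\lambda X\cdot\varphi^+$ belongs to the image of $\zeta\mapsto\zeta\cdot\varphi^-$, which fixes its anti-self-dual part; these requirements are compatible and determine a unique $2$-vector $\eta(X)$, linear in $X$ by uniqueness and smooth, which satisfies (\ref{eqnnablaphi}) by construction.

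Then I would bring in the Dirac equation. Writing $\eta(X)=\eta_0(X)+\mu(X)\,e_1\cdot e_2+\nu(X)\,e_3\cdot e_4$ with $\eta_0(X)\in TM\wedge E$ and rewriting (\ref{dirac equation}) as $\sum_j e_j\cdot(\nabla_{e_j}\varphi-\lambda e_j\cdot\varphi)=\vec H\cdot\varphi$, i.e.\ $\sum_j e_j\cdot\eta(e_j)\cdot\varphi=\vec H\cdot\varphi$, one reduces the left-hand side by the Clifford relations, converting the $3$-vectors that arise (acting on $\varphi^\pm$) into vectors of $TM\oplus E$ via $\omega_4=\pm1$ on $\Sigma^\pm$. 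The $\Sigma^+$- and $\Sigma^-$-parts then read $w_+\cdot\varphi^+=\vec H\cdot\varphi^+$ and $w_-\cdot\varphi^-=\vec H\cdot\varphi^-$ for some $w_\pm\in TM\oplus E$ whose tangential components are two independent linear combinations of $\mu$ and $\nu$; since $v\mapsto v\cdot\varphi^\pm$ is injective and $\vec H\in\Gamma(E)$, those components must vanish, forcing $\mu=\nu=0$. So $\eta(X)=\sum_j e_j\cdot T_X(e_j)$ for a unique bilinear $T:TM\times TM\to E$, and the normal components of the two relations give that $T$ is symmetric and that $-\sum_j T_{e_j}(e_j)=\vec H$; putting $B:=-2T$ we get a symmetric $B$ with $\vec H=\frac12\trace B$ and $\eta(X)=-\frac12\sum_j e_j\cdot B(e_j,X)$, which is (\ref{relation eta B}).

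It remains to identify this $B$ with the one of (\ref{B function phi}). From $\nabla_X\varphi^+=\eta(X)\cdot\varphi^++\lambda X\cdot\varphi^-$ and $X\cdot Y+Y\cdot X=-2\langle X,Y\rangle$ one gets $X\cdot\nabla_Y\varphi^++Y\cdot\nabla_X\varphi^++2\lambda\langle X,Y\rangle\varphi^-=(X\cdot\eta(Y)+Y\cdot\eta(X))\cdot\varphi^+$, and expanding $\eta$ in the $2$-dimensional Clifford algebra of $TM$ shows that $X\cdot\eta(Y)+Y\cdot\eta(X)=B(X,Y)-\frac12(e_1\cdot e_2)\cdot\zeta_{X,Y}$ with $\zeta_{X,Y}\in E$ explicit and symmetric in $X,Y$. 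Pairing with $\xi\cdot\varphi^+$ ($\xi\in\Gamma(E)$) and using $\pre\langle v\cdot\psi,w\cdot\psi\rangle=\langle v,w\rangle|\psi|^2$ together with $(e_1\cdot e_2)\cdot\zeta\cdot\varphi^+=\zeta^\perp\cdot\varphi^+$ and $(e_1\cdot e_2)\cdot\zeta\cdot\varphi^-=-\zeta^\perp\cdot\varphi^-$ (where $\zeta^\perp\in E$ is a fixed rotation of $\zeta$ in the plane $E$, the sign difference coming from $\omega_4=\pm1$ on $\Sigma^\pm$), one finds that the first bracket in (\ref{B function phi}) equals $|\varphi^+|^2(\langle B(X,Y),\xi\rangle-\tfrac12\langle\zeta_{X,Y}^\perp,\xi\rangle)$ and, by the same computation on $\Sigma^-$, that the second equals $|\varphi^-|^2(\langle B(X,Y),\xi\rangle+\tfrac12\langle\zeta_{X,Y}^\perp,\xi\rangle)$. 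Dividing by $2|\varphi^+|^2$, resp.\ $2|\varphi^-|^2$, and adding, the $\zeta_{X,Y}$-terms cancel and the right-hand side of (\ref{B function phi}) collapses to $\langle B(X,Y),\xi\rangle$; hence $B$ is the map defined by (\ref{B function phi}), and the lemma follows. I expect the main difficulty to be the bookkeeping of Clifford multiplication by $2$-vectors on the half-spinor bundles in dimension $4$ — the self-dual/anti-self-dual dichotomy — which both makes the construction of $\eta(X)$ possible and forces $\eta(X)\in TM\wedge E$, together with the last-step observation that the ``twist'' terms $\zeta_{X,Y}$ enter the $\Sigma^+$- and $\Sigma^-$-contributions with opposite signs, which is exactly what makes the symmetrized formula (\ref{B function phi}) insensitive to them; everything else reduces to the Clifford relations and elementary linear algebra.
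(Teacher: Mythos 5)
Your proof is correct, but it is organized quite differently from the paper's. The paper works pointwise in the four rank-one pieces $\Sigma^{\pm\pm}$: it builds explicit orthonormal frames $e_j\cdot e_3\cdot\varphi^{\mp\mp}/|\varphi^{\mp\mp}|$ (with $e_3$ aligned with $\vec H$), introduces eight bilinear forms $F_{\pm\pm},B_{\pm\pm}$, extracts their traces and symmetry defects from the Dirac equation, and then uses (\ref{condphi+-}) to show that the combinations $F_\pm=A_{\pm\pm}/|\varphi^{\mp\mp}|^2-A_{\mp\mp}/|\varphi^{\pm\pm}|^2$ are symmetric, trace-free and of rank at most one, hence zero; this yields $\nabla_X\varphi^{\pm}=\frac{1}{|\varphi^{\pm}|^2}A^{\pm}(X)\cdot e_3\cdot\varphi^{\pm}+\lambda X\cdot\varphi^{\mp}$ and then $\eta$ explicitly. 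You instead exploit the structure of $\Lambda^2(TM\oplus E)$ acting on the half-spinor bundles: the orbit map $\zeta\mapsto\zeta\cdot\psi$ surjects onto the real hyperplane $\{\chi:\pre\langle\chi,\psi\rangle=0\}$ with kernel one chirality of $2$-vectors, so (\ref{condphi+-}) alone produces a unique $2$-vector $\eta(X)$ solving (\ref{eqnnablaphi}), and the Dirac equation is used only afterwards, to kill the $\Lambda^2TM\oplus\Lambda^2E$ components and to get the symmetry and trace of $B$; the identification with (\ref{B function phi}) then follows from the cancellation of the $e_1\cdot e_2\cdot\zeta_{X,Y}$ terms, which enter the $\Sigma^+$ and $\Sigma^-$ contributions with opposite signs. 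Both arguments ultimately rest on the same linear-algebra fact (the paper isolates it as Lemma \ref{order2} via the $\HH(2)$ model). What your route buys: it never needs the individual components $\varphi^{\pm\pm}$ to be nonzero, so it avoids the paper's separate discussion of the degenerate case $\varphi^{++}(p)=0$ or $\varphi^{--}(p)=0$, and it makes transparent why exactly the two scalar conditions (\ref{condphi+-}) are what is needed. What the paper's route buys is the explicit expression of $\eta$ in terms of the forms $A_\pm$, which is reused later. The only points worth writing out carefully in your version are the injectivity/surjectivity of the orbit maps (trivial stabilizer of a nonzero spinor under $\mathfrak{su}(2)$) and the sign bookkeeping when converting $3$-vectors to vectors via $\omega_4=\pm1$ on $\Sigma^{\pm}$; you have flagged both, and they check out.
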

\noindent
The proof of this lemma will be given in Section \ref{secpflem}. \\ \\
{\it Proof of Proposition \ref{proptothm1}:} The equations of Gauss, Codazzi and Ricci appear to be the integrability conditions of (\ref{eqnnablaphi}). Indeed  computing the spinorial curvature $\mathcal{R}$ for $\varphi$, we first observe that (\ref{relation eta B}) implies
$$X\cdot\eta(Y)-\eta(Y)\cdot X=B(X,Y)=Y\cdot\eta(X)-\eta(X)\cdot Y$$
for all $X,Y\in TM.$ 
Then, a direct computation yields
\beq\label{curv}
\mathcal{R}(X,Y)\varphi&=&d^{\nabla}\eta(X,Y)\cdot\varphi+\big(\eta(Y)\cdot\eta(X)-\eta(X)\cdot\eta(Y)\big)\cdot\varphi\\
&&+\lambda^2(Y\cdot X-X\cdot Y)\cdot\varphi,\nonumber
\eeq
where
$$d^{\nabla}\eta(X,Y)=\nabla_X(\eta(Y))-\nabla_Y(\eta(X))-\eta([X,Y]).$$
Here we also denote by $\nabla$ the natural connection on $Cl(TM\oplus E)=Cl(M)\hat\otimes Cl(E).$ 
\begin{lem}
We have:
\be
\item The left-hand side of \eqref{curv} satisfies
$$\mathcal{R}(e_1,e_2)\varphi=-\frac{1}{2}Ke_1\cdot e_2\cdot\varphi-\frac{1}{2}K_Ne_3\cdot e_4\cdot\varphi.$$
\item The first term of the right-hand side of \eqref{curv} satisfies
 $$d^{\nabla}\eta(X,Y)=-\frac{1}{2}\sum_{j=1}^2e_j\cdot\Big((\nabla^N_XB)(Y,e_j)-(\nabla^N_YB)(X,e_j)\Big)$$
 where $\nabla^N$ stands for the natural connection on $T^*M\otimes T^*M\otimes E.$
\item The second term of the right-hand side of \eqref{curv} satisfies
\beQ
\eta(e_2)\cdot\eta(e_1)-\eta(e_1)\cdot\eta(e_2)&=&\frac{1}{2}\big(|B(e_1,e_2)|^2-\left<B(e_1,e_1),B(e_2,e_2)\right>\big)e_1\cdot e_2\\
&&+\frac{1}{2}\left<\left(S_{e_3}\circ S_{e_4}-S_{e_4}\circ S_{e_3}\right)(e_1),e_2\right>e_3\cdot e_4.
\eeQ
\ee
\end{lem}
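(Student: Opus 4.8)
The plan is to prove the three identities (1)--(3) by direct computation, working in the Clifford algebra $Cl(TM\oplus E)=Cl(M)\,\widehat\otimes\,Cl(E)$ and using only the defining relation $\eta(X)=-\frac12\sum_j e_j\cdot B(e_j,X)$, the symmetry of $B$, and the standard Clifford relations $e_i\cdot e_j+e_j\cdot e_i=-2\delta_{ij}$ for the orthonormal frame $(e_1,e_2,e_3,e_4)$ of $TM\oplus E$. Throughout I would fix a point and choose the tangent frame $(e_1,e_2)$ to be synchronous ($\nabla_{e_i}e_j=0$ at the point) so that all connection terms that appear are the genuine curvature/derivative terms and not artefacts of the frame; the normal frame $(e_3,e_4)$ is left arbitrary, with its connection coefficients absorbed into $\nabla^N$.

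For (1): I would compute $\mathcal R(e_1,e_2)\varphi=\nabla_{e_1}\nabla_{e_2}\varphi-\nabla_{e_2}\nabla_{e_1}\varphi-\nabla_{[e_1,e_2]}\varphi$ directly from the definition $\nabla=\nabla^{\Sigma M}\otimes\iid+\iid\otimes\nabla^{\Sigma E}$. The curvature of the twisted connection splits as the sum of the curvature of $\Sigma M$ and that of $\Sigma E$, acting on the respective factors. The curvature of the spinor connection on a surface is the classical formula $\mathcal R^{\Sigma M}(e_1,e_2)=-\frac12 K\, e_1\cdot_{_M} e_2$ (curvature operator in terms of Clifford multiplication), and likewise $\mathcal R^{\Sigma E}(e_1,e_2)=-\frac12 K_N\, e_3\cdot_{_E} e_4$ since $E$ has rank $2$ with curvature $K_N$. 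Reassembling via the definition of the Clifford product $\cdot$ on $\Sigma$ gives exactly $-\frac12 K\,e_1\cdot e_2\cdot\varphi-\frac12 K_N\,e_3\cdot e_4\cdot\varphi$. This step is essentially bookkeeping about how curvatures of tensor products decompose.

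For (2): starting from $d^\nabla\eta(X,Y)=\nabla_X(\eta(Y))-\nabla_Y(\eta(X))-\eta([X,Y])$ and substituting $\eta(Y)=-\frac12\sum_j e_j\cdot B(e_j,Y)$, the derivative $\nabla_X$ acts on both the frame vectors $e_j$ (through the connection on $Cl(TM\oplus E)$) and on $B$; in a frame that is synchronous for $TM$ the tangential pieces drop, the normal pieces reorganize into $\nabla^N_X B$, and one is left with $-\frac12\sum_j e_j\cdot\big((\nabla^N_X B)(Y,e_j)-(\nabla^N_Y B)(X,e_j)\big)$ after noting that the $\eta([X,Y])$ term cancels the residual bracket contribution. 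For (3): I would expand $\eta(e_2)\cdot\eta(e_1)-\eta(e_1)\cdot\eta(e_2)=\frac14\sum_{i,j}\big(e_i\cdot B(e_i,e_2)\cdot e_j\cdot B(e_j,e_1)-e_i\cdot B(e_i,e_1)\cdot e_j\cdot B(e_j,e_2)\big)$, move the tangent vectors $e_i,e_j$ past the normal vectors $B(e_\bullet,e_\bullet)$ (they anticommute in $Cl(TM\oplus E)$, producing a sign but no scalar), use $e_1\cdot e_1=e_2\cdot e_2=-1$ and $e_1\cdot e_2=-e_2\cdot e_1$, and then decompose each $B(e_i,e_j)$ in the orthonormal normal frame $(e_3,e_4)$; the diagonal terms in the normal indices collapse to scalars times $e_1\cdot e_2$ and reproduce $\frac12(|B(e_1,e_2)|^2-\langle B(e_1,e_1),B(e_2,e_2)\rangle)e_1\cdot e_2$ (via $\langle B(X,Y),e_k\rangle=g(S_{e_k}X,Y)$), while the off-diagonal normal terms $e_3\cdot e_4$ assemble into $\frac12\langle(S_{e_3}S_{e_4}-S_{e_4}S_{e_3})(e_1),e_2\rangle e_3\cdot e_4$.

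The main obstacle is the purely algebraic bookkeeping in part (3): one must carefully track the signs coming from commuting the (mutually anticommuting) tangent and normal Clifford generators, and correctly identify which combinations of products of shape-operator matrix entries are the "Gauss" scalar (coefficient of $e_1\cdot e_2$) and which are the "Ricci" scalar (coefficient of $e_3\cdot e_4$). Everything else is a routine, if somewhat lengthy, computation once the synchronous-frame convention is fixed; no new geometric input beyond the definitions in Section 2 is needed.
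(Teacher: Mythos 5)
Your proposal is correct and follows essentially the same route as the paper: splitting the curvature of the twisted connection and invoking the Ricci identity on each factor for (1), working in a synchronous tangent frame to reduce $d^\nabla\eta$ to the covariant derivatives of $B$ for (2), and expanding the Clifford products with $B_{ij}=B_{ij}^3e_3+B_{ij}^4e_4$ to separate the $e_1\cdot e_2$ and $e_3\cdot e_4$ components for (3). No gaps; the sign bookkeeping you flag (anticommutation of tangent and normal generators, $\langle B(X,Y),e_k\rangle=g(S_{e_k}X,Y)$) is exactly what the paper's computation carries out.
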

{\it Proof:} First, we compute $\mathcal{R}(e_1,e_2)\varphi$. We recall that $\Sigma=\Sigma M\otimes\Sigma E$ and suppose that $\varphi=\alpha\otimes\sigma$ with $\alpha\in\Sigma M$ and $\sigma\in\Sigma E$. Thus,
$$\mathcal{R}(e_1,e_2)\varphi=\mathcal{R}^M(e_1,e_2)\alpha\otimes\sigma+\alpha\otimes\mathcal{R}^E(e_1,e_2)\sigma,$$
where $\mathcal{R}^M$ and $\mathcal{R}^E$ are the spinorial curvatures on $M$ and $E$ respectively.
Moreover, by the Ricci identity on $M$, we have 
$$\mathcal{R}^M(e_1,e_2)\alpha=-\frac{1}{2}Ke_1\cdot e_2\cdot\alpha,$$
where $K$ is the Gauss curvature of $(M,g)$. Similarly, we have
$$\mathcal{R}^E(e_1,e_2)\sigma=-\frac{1}{2}K_Ne_3\cdot e_4\cdot\sigma,$$
where $K_N$ is the curvature of the connection on $E.$ These last two relations give the first point of the lemma.\\
For the second point of the lemma, we choose $e_j$ so that at $p\in M$, $\nabla {e_j}_{|p}=0$. Then, we have
\beQ
d^{\nabla}\eta(X,Y)&=&\nabla_X(\eta(Y))-\nabla_Y(\eta(X))-\eta([X,Y])\\
&=&\sum_{j=1,2}-\frac{1}{2}\nabla_X\big(e_j\cdot B(Y,e_j)\big)+\frac{1}{2}\nabla_Y\big(e_j\cdot B(X,e_j)\big)+\frac{1}{2}e_j\cdot B([X,Y],e_j)\\
&=&\sum_{j=1,2}-\frac{1}{2}e_j\cdot\nabla^E_X(B(Y,e_j))+\frac{1}{2}e_j\cdot\nabla^E_Y(B(X,e_j))+\frac{1}{2}e_j\cdot B(\nabla_XY,e_j)\\&&\hspace{.8cm}-\frac{1}{2}e_j\cdot B(\nabla_YX,e_j)\\
&=&-\frac{1}{2}\sum_{j=1,2}e_j\cdot\Big((\nabla^N_XB)(Y,e_j)-(\nabla^N_YB)(X,e_j)\Big)
\eeQ
since $[X,Y]=\nabla_XY-\nabla_YX$ and $(\nabla^N_XB)(Y,e_j)=\nabla^E_X(B(Y,e_j))-B(\nabla_XY,e_j).$ Here $\nabla^E$ stands for the given connection on $E$. 
\\We finally prove the third assertion of the lemma. In order to simplify the notation, we set $B(e_i,e_j)=B_{ij}$. We have
\beQ
\eta(e_2)\cdot\eta(e_1)-\eta(e_1)\cdot\eta(e_2)&=&-\frac{1}{4}\sum_{j,k=1}^2e_j\cdot B_{1j}\cdot e_k\cdot B_{2k}+\frac{1}{4}\sum_{j,k=1}^2e_j\cdot B_{2j}\cdot e_k\cdot B_{1k}\eeQ
\beQ
&=&\frac{1}{4}\Bigg[-e_1\cdot B_{11}\cdot e_1\cdot B_{21}-e_1\cdot B_{11}\cdot e_2\cdot B_{22}-e_2\cdot B_{12}\cdot e_1\cdot B_{21}-e_2\cdot B_{12}\cdot e_2\cdot B_{22}\\
&&+e_1\cdot B_{21}\cdot e_1\cdot B_{11}+e_1\cdot B_{21}\cdot e_2\cdot B_{12}+e_2\cdot B_{22}\cdot e_1\cdot B_{11}+e_2\cdot B_{22}\cdot e_2\cdot B_{12}\Bigg]\\
&=&\frac{1}{2}\Big[|B_{12}|^2-\left<B_{11},B_{22}\right>\Big]e_1\cdot e_2+\frac{1}{4}\Big[-B_{11}\cdot B_{21}+B_{21}\cdot B_{11}-B_{12}\cdot B_{22}+B_{22}\cdot B_{12}\Big].
\eeQ
Now, if we write $B_{ij}=B_{ij}^3e_3+B_{ij}^4e_4$, we have
$$-B_{11}\cdot B_{21}+B_{21}\cdot B_{11}=2(-B_{11}^3B_{21}^4+B_{21}^3B_{11}^4)e_3\cdot e_4$$
and
$$-B_{12}\cdot B_{22}+B_{22}\cdot B_{12}=2(-B_{12}^3B_{22}^4+B_{12}^4B_{22}^3)e_3\cdot e_4.$$
Moreover
$$-B_{11}^3B_{21}^4+B_{21}^3B_{11}^4-B_{12}^3B_{22}^4+B_{12}^4B_{22}^3=\left<(S_{e_3}\circ S_{e_4}-S_{e_4}\circ S_{e_3})(e_1),e_2\right>$$
since for $j\in\{1,2\}$ and $k\in\{3,4\}$, we have $S_{e_k}e_j=B_{j1}^ke_1+B_{j2}^ke_2.$ The formula follows.
$\finpreuve$\\
Now, we give this final lemma
\begin{lem}\label{order2}
If $T$ is an element of $\mathcal{C}l(M)\hat{\otimes}\mathcal{C}l(E)$ of order $2$, that is of
$$\Lambda^2M\otimes1\ \oplus\ TM\otimes E\ \oplus\ 1\otimes\Lambda^2E,$$
so that 
$$T\cdot\varphi=0,$$
where $\varphi$ is a spinor field of $\Sigma$ such that $\varphi^+$ and $\varphi^-$ do not vanish, then $T=0$.
\end{lem}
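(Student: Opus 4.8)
The plan is to reduce the statement to a pointwise algebraic fact about the Clifford module $\Sigma_p = \Sigma_p M \otimes \Sigma_p E$ at an arbitrary point $p \in M$, and then exploit the four-fold decomposition $\Sigma = \Sigma^{++} \oplus \Sigma^{--} \oplus \Sigma^{+-} \oplus \Sigma^{-+}$ together with the hypothesis that $\varphi^+ = \varphi^{++} + \varphi^{--}$ and $\varphi^- = \varphi^{+-} + \varphi^{-+}$ are both nonzero. Since $\dim_{\C} \Sigma_p M = \dim_{\C} \Sigma_p E = 2$, each half-spinor bundle is a complex line bundle, so $\Sigma_p^{\pm}$ is two-complex-dimensional, and for any nonzero $\psi \in \Sigma_p^{\pm}$ the Clifford module structure over the rank-$2$ algebra $\mathcal{C}l(2) \hat\otimes \mathcal{C}l(2)$ acting appropriately will let us squeeze out strong constraints.

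First I would write $T = \omega + X \otimes \xi' + \mu$ with $\omega \in \Lambda^2 M \otimes 1$, $\mu \in 1 \otimes \Lambda^2 E$, and the middle term a sum $\sum_i X_i \otimes \xi_i$ in $TM \otimes E$ (equivalently $TM \cdot E$ inside the Clifford algebra). I would record the effect of each piece on the grading: Clifford multiplication by $e_1 \cdot e_2$ (the volume element of $M$) acts as a scalar $\pm i$ on $\Sigma^{\pm} M$, hence preserves each of the four subbundles; likewise $e_3 \cdot e_4$ preserves each subbundle; but multiplication by an element of $TM$ swaps $\Sigma^{\pm}M$ and fixes $\Sigma E$, while multiplication by an element of $E$ swaps $\Sigma^{\pm}E$ and fixes $\Sigma M$. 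Consequently the ``diagonal'' part $\omega + \mu$ maps $\Sigma^{++} \to \Sigma^{++}$, etc., whereas the mixed part $\sum_i X_i \cdot \xi_i$ sends $\Sigma^{++} \to \Sigma^{--}$ and $\Sigma^{+-} \to \Sigma^{-+}$ and similarly interchanges the complementary pairs. Thus $T \cdot \varphi = 0$ splits, upon projecting onto the four subbundles, into equations that separately force $(\omega + \mu)\cdot \varphi^{\pm\pm} = 0$ on each summand and the mixed terms to vanish on each summand.

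Next I would analyze the diagonal part. On $\Sigma^{++}$, the operator $e_1 \cdot e_2$ acts as $i \cdot \iid$ (up to the sign convention fixed by the orientation) and $e_3\cdot e_4$ as $i\cdot\iid$, so $\omega = a\, e_1 e_2$ and $\mu = b\, e_3 e_4$ act on $\varphi^{++}$ as multiplication by $i(a+b)$ times a nonzero vector (if $\varphi^{++}\neq 0$), while on $\Sigma^{--}$ they act as $i(-a-b)$, and on the mixed pieces as $\pm i(a - b)$. Because at least one of $\varphi^{++}, \varphi^{--}$ is nonzero (otherwise $\varphi^+ = 0$) and at least one of $\varphi^{+-}, \varphi^{-+}$ is nonzero, the vanishing conditions give $a + b = 0$ from the $\varphi^+$ part and $a - b = 0$ from the $\varphi^-$ part, hence $a = b = 0$, i.e. $\omega = \mu = 0$. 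For the mixed term, Clifford multiplication by $\sum_i X_i \cdot \xi_i$ restricted to a single nonzero line $\varphi^{++} \in \Sigma^{++}$ maps into $\Sigma^{--}$; writing $\varphi^{++} = \alpha_0 \otimes \sigma_0$ with $\alpha_0, \sigma_0$ nonzero, the value is $\sum_i (X_i \cdot_M \alpha_0) \otimes (\xi_i \cdot_E \sigma_0)$, and since $X \mapsto X \cdot_M \alpha_0$ and $\xi \mapsto \xi \cdot_E \sigma_0$ are $\R$-linear isomorphisms $TM \to \Sigma^- M$ and $E \to \Sigma^- E$, the tensor $\sum_i X_i \otimes \xi_i$ must be zero. (If instead $\varphi^{++} = 0$ but $\varphi^{--} \neq 0$ one argues symmetrically with the roles of $+,-$ exchanged, and the same for the $\varphi^-$ pair; in every case one of the four subbundle components is nonzero, which suffices since $\sum_i X_i \otimes \xi_i$ is one global tensor.)

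The main obstacle is keeping the sign and scalar conventions straight: one must verify precisely how the two volume elements act on the four subbundles (a factor of $i$ versus $-i$ depending on the chosen spin representation and orientation) and must confirm that the real scalar product identifications really do make $X \mapsto X \cdot_M \alpha_0$ an isomorphism onto the complementary half-spinor line — this uses that $\alpha_0 \neq 0$ and that $\mathcal{C}l(2)$ acts irreducibly. Once these linear-algebra facts are pinned down, the conclusion $T = \omega + \sum_i X_i \cdot \xi_i + \mu = 0$ follows. I would also remark that the argument is purely pointwise, so no smoothness or integrability of $\varphi$ is needed beyond $\varphi^{\pm}$ being nowhere zero, which is exactly the standing hypothesis; and that this lemma is then applied to $T = \mathcal{R}(e_1,e_2) - \big(d^\nabla\eta(e_1,e_2) + (\eta(e_2)\eta(e_1) - \eta(e_1)\eta(e_2)) + \lambda^2(e_2 e_1 - e_1 e_2)\big)$, whose vanishing on $\varphi$ yields, componentwise on $\Lambda^2 M$ and $\Lambda^2 E$, exactly the Gauss, Ricci and Codazzi equations.
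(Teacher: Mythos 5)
Your strategy of reducing to pointwise linear algebra on the four subbundles is sensible, but two consecutive steps fail, and for the same underlying reason. First, the projected equations do \emph{not} decouple: since the mixed part $M=\sum_i X_i\cdot\xi_i$ sends $\Sigma^{--}$ into $\Sigma^{++}$, the projection of $T\cdot\varphi=0$ onto $\Sigma^{++}$ reads $(\omega+\mu)\cdot\varphi^{++}+M\cdot\varphi^{--}=0$; you cannot conclude that each summand vanishes separately, and since $\Sigma^{++}$ is a complex line there is no orthogonality to invoke --- $M\cdot\varphi^{--}$ is simply some complex multiple of $\varphi^{++}$, just like $(\omega+\mu)\cdot\varphi^{++}$. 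Your derivation of $a+b=0$ and $a-b=0$ is therefore unjustified whenever both components of $\varphi^+$ (resp.\ of $\varphi^-$) are nonzero, which is the generic situation. Second, even granting $M\cdot\varphi^{++}=0$ with $\varphi^{++}\neq 0$, the conclusion $M=0$ is false: $TM\otimes E$ has real dimension $4$, while $\Sigma^{--}=\Sigma^-M\otimes_{\C}\Sigma^-E$ has real dimension $2$ (the tensor product is over $\C$, so $(iX)\otimes\xi$ and $X\otimes(i\xi)$ act identically), hence $M\mapsto M\cdot\varphi^{++}$ has a two-dimensional kernel. Concretely, the anti-self-dual combinations $e_1\cdot e_3+e_2\cdot e_4$ and $e_2\cdot e_3-e_1\cdot e_4$ (exactly those appearing in the paper's formula for $\eta^-$) annihilate all of $\Sigma^+$, so no information about the action on $\varphi^+$ alone can detect them; your parenthetical claim that one nonvanishing component ``suffices since $\sum_i X_i\otimes\xi_i$ is one global tensor'' is where this goes wrong.

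The decomposition that makes the argument work is not diagonal-versus-mixed but self-dual versus anti-self-dual: $\Lambda^2(T_pM\oplus E_p)=\Lambda^2_+\oplus\Lambda^2_-$, with $\Lambda^2_-$ acting as zero on $\Sigma^+$ and $\Lambda^2_+$ acting as zero on $\Sigma^-$ (note that $a+b$ together with half of $M$ constitutes the self-dual block, so your diagonal and mixed pieces genuinely mix). The paper packages this via $\mathcal{C}l_2\hat\otimes\mathcal{C}l_2\simeq\mathcal{C}l_4\simeq\HH(2)$: an order-two element acts on $\Sigma^+\oplus\Sigma^-\simeq\HH\oplus\HH$ as a block-diagonal pair $(p,q)$ of purely imaginary quaternions by left multiplication, and left multiplication by a nonzero quaternion is injective, so $p\,[\varphi^+]=0$ and $q\,[\varphi^-]=0$ with $[\varphi^\pm]\neq 0$ force $p=q=0$. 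To repair your version you would have to treat the two coupled equations on $\Sigma^{++}\oplus\Sigma^{--}$ as a single system and show that its only solution is the trivial one when $\varphi^+\neq 0$ --- which is precisely this division-algebra step. Your concluding remark about how the lemma is applied to the curvature identity is correct.
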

\noindent
{\it Proof:} We have
$$\mathcal{C}l_2\hat{\otimes}\mathcal{C}l_2\simeq\mathcal{C}l_4\simeq\HH(2),$$
where $\HH(2)$ is the set of $2\times2$ matrices with quaternionic coefficients. The spinor bundle $\Sigma$ and the Clifford product come from the representation
$$\HH(2)\lgra\eend_{\HH}(\HH\oplus\HH).$$
The first factor of $\HH\oplus\HH$ correspond to $\Sigma^+$ and the second to $\Sigma^-$. Moreover, elements of order $2$ of $\mathcal{C}l_4$ are matrices 
$$\left(\begin{array}{cc}
p&0\\0&q\end{array}\right),$$
where $p,q$ are purely imaginary quaternions. Hence $T\cdot\varphi=0$ is equivalent to
$$\left(\begin{array}{cc}
p&0\\0&q\end{array}\right)
\left(\begin{array}{c}
\alpha\\ \sigma\end{array}\right)
=\left(\begin{array}{cc}
0\\0\end{array}\right)$$
with $\alpha,\sigma$ non zero quaternions. Thus $p=q=0$, and so $T$ vanishes identically.
$\finpreuve$\\
We deduce from \eqref{curv} and Lemma \ref{order2} and comparing terms, that
$$\left\{
\begin{array}{l}
K=\left<B(e_1,e_1),B(e_2,e_2)\right>-|B(e_1,e_2)|^2+4\lambda^2,\\ \\
K_N=-\left<(S_{e_3}\circ S_{e_4}-S_{e_4}\circ S_{e_3})(e_1),e_2\right>,\\ \\
(\widetilde{\nabla}_XB)(Y,e_j)-(\widetilde{\nabla}_YB)(X,e_j)=0,\quad\forall j=1,2,
\end{array}\right.$$
which are respectively the Gauss, Ricci and Codazzi equations. 
$\finpreuve$\\
From Proposition \ref{proptothm1} and the fundamental theorem of submanifolds, we deduce that a spinor field solution of (\ref{dirac equation}) such that (\ref{condphi+-}) holds defines a local isometric immersion of $M$ into $\Mc$ with normal bundle $E$ and second fundamental form $B$. This implies the equivalence between assertions 1 and 3 in  Theorem \ref{thm1}. The equivalence between assertions 1 and 2 is given by Lemma \ref{lem1} and will be proven in the next section. 
\begin{rem} If in Theorem \ref{thm1} we assume moreover that the manifold is simply connected, the spinor field solution of (\ref{dirac equation}) defines a global isometric immersion of  $M$ into $\Mc$.
\end{rem}
\section{Proof of Lemma \ref{lem1}}\label{secpflem}
In order to prove Lemma \ref{lem1}, we need some preliminary results. First, we remark that 
$$\left\{\begin{array}{l}
\ D\varphi^{--}=\vec{H}\cdot\varphi^{++}-2\lambda\varphi^{+-},\\
\ D\varphi^{++}=\vec{H}\cdot\varphi^{--}-2\lambda\varphi^{-+},\\
\ D\varphi^{+-}=\vec{H}\cdot\varphi^{-+}-2\lambda\varphi^{--},\\
 \ D\varphi^{-+}=\vec{H}\cdot\varphi^{+-}-2\lambda\varphi^{++}.
 \end{array}\right.$$
We fix a point $p\in M,$ and consider $e_3$ a unit vector in $E_p$ so that $\vec{H}=|\vec H|e_3$ at $p.$ We complete $e_3$ by $e_4$ to get a positively oriented and orthonormal frame of $E_p$. We first assume that $\varphi^{--},$ $\varphi^{++},$ $\varphi^{+-}$ and $\varphi^{-+}$ do not vanish at $p.$ We see easily that 
$$\left\{e_1\cdot e_3\cdot\frac{\varphi^{--}}{|\varphi^{--}|},e_2\cdot e_3\cdot\frac{\varphi^{--}}{|\varphi^{--}|}\right\}$$
is an orthonormal frame of $\Sigma^{++}$ for the real scalar product $\pre\left<\cdot,\cdot\right>$. Indeed, we have
\beQ
\pre\left<e_1\cdot e_3\cdot\varphi^{--},e_2\cdot e_3\cdot\varphi^{--}\right>&=&\pre\left<\varphi^{--},e_3\cdot e_1\cdot e_2\cdot e_3\cdot\varphi^{--}\right>\\
&=&\pre\left(i|\varphi^{--}|^2\right)=0.
\eeQ
Analogously, 
$$\left\{e_1\cdot e_3\cdot\frac{\varphi^{++}}{|\varphi^{++}|},e_2\cdot e_3\cdot\frac{\varphi^{++}}{|\varphi^{++}|}\right\},$$
$$\left\{e_1\cdot e_3\cdot\frac{\varphi^{-+}}{|\varphi^{-+}|},e_2\cdot e_3\cdot\frac{\varphi^{-+}}{|\varphi^{-+}|}\right\},$$
$$\left\{e_1\cdot e_3\cdot\frac{\varphi^{+-}}{|\varphi^{+-}|},e_2\cdot e_3\cdot\frac{\varphi^{+-}}{|\varphi^{+-}|}\right\}$$
are orthonormal frames of $\Sigma^{--}$, $\Sigma^{+-}$ and $\Sigma^{-+}$ respectively. We define the following bilinear forms
$$F_{++}(X,Y)=\pre\left<\nabla_X\varphi^{++},Y\cdot e_3\cdot\varphi^{--}\right>,$$
$$F_{--}(X,Y)=\pre\left<\nabla_X\varphi^{--},Y\cdot e_3\cdot\varphi^{++}\right>,$$
$$F_{+-}(X,Y)=\pre\left<\nabla_X\varphi^{+-},Y\cdot e_3\cdot\varphi^{-+}\right>,$$
$$F_{-+}(X,Y)=\pre\left<\nabla_X\varphi^{-+},Y\cdot e_3\cdot\varphi^{+-}\right>,$$
and 
$$B_{++}(X,Y)=-\pre\left<\lambda X\cdot\varphi^{-+},Y\cdot e_3\cdot\varphi^{--}\right>,$$
$$B_{--}(X,Y)=-\pre\left<\lambda X\cdot\varphi^{+-},Y\cdot e_3\cdot\varphi^{++}\right>,$$
$$B_{+-}(X,Y)=-\pre\left<\lambda X\cdot\varphi^{--},Y\cdot e_3\cdot\varphi^{-+}\right>,$$
$$B_{-+}(X,Y)=-\pre\left<\lambda X\cdot\varphi^{++},Y\cdot e_3\cdot\varphi^{+-}\right>.$$
We have this first lemma: 
\begin{lem}\label{first lemma tr}
We have
\begin{enumerate} 
\item $\thmrm{tr}(F_{++})=-|\vec{H}||\varphi^{--}|^2+2\pre\left<\lambda\varphi^{-+},e_3\cdot\varphi^{--}\right>,$
\item $\thmrm{tr}(F_{--})=-|\vec{H}||\varphi^{++}|^2+2\pre\left<\lambda\varphi^{+-},e_3\cdot\varphi^{++}\right>,$
\item $\thmrm{tr}(F_{+-})=-|\vec{H}||\varphi^{-+}|^2+2\pre\left<\lambda\varphi^{--},e_3\cdot\varphi^{-+}\right>,$
\item $\thmrm{tr}(F_{-+})=-|\vec{H}||\varphi^{+-}|^2+2\pre\left<\lambda\varphi^{++},e_3\cdot\varphi^{+-}\right>.$
\end{enumerate}
\end{lem}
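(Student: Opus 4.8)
The plan is to read each of the four identities directly off the componentwise form of the Dirac equation listed at the beginning of this section, after rewriting $\trace(F_{\bullet\bullet})$ as a pairing against $D\varphi^{\bullet\bullet}$. I will carry out $\trace(F_{++})$ in full; the other three are verbatim the same.

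First I would use that $(e_1,e_2)$ is orthonormal and that Clifford multiplication by a vector of $TM\oplus E$ is antihermitian for $\pre\langle\cdot,\cdot\rangle$ to move the $e_j$ across the pairing:
\[
\trace(F_{++})=\sum_{j=1}^2\pre\langle\nabla_{e_j}\varphi^{++},\,e_j\cdot e_3\cdot\varphi^{--}\rangle=-\pre\langle D\varphi^{++},\,e_3\cdot\varphi^{--}\rangle .
\]
Then I would substitute $D\varphi^{++}=\vec H\cdot\varphi^{--}-2\lambda\varphi^{-+}$, one of the four components of the Dirac equation \eqref{dirac equation} recalled above.

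It remains to evaluate the two resulting terms. Since $\vec H=|\vec H|\,e_3$ at $p$ and $e_3$ is a unit vector, the Clifford relation $e_3\cdot e_3=-1$ together with antihermicity gives $\langle e_3\cdot\psi,e_3\cdot\psi\rangle=|\psi|^2$, so $-\pre\langle\vec H\cdot\varphi^{--},e_3\cdot\varphi^{--}\rangle=-|\vec H|\,|\varphi^{--}|^2$; the second term is by definition $2\pre\langle\lambda\varphi^{-+},e_3\cdot\varphi^{--}\rangle$. This is precisely statement 1. Statements 2, 3 and 4 follow from the same two-line computation after replacing the triple $(\varphi^{++},\varphi^{--},\varphi^{-+})$ by $(\varphi^{--},\varphi^{++},\varphi^{+-})$, by $(\varphi^{+-},\varphi^{-+},\varphi^{--})$ and by $(\varphi^{-+},\varphi^{+-},\varphi^{++})$ respectively, which is exactly the cyclic pattern of the four components of $D\varphi$.

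I do not expect a real obstacle here: the statement is a bookkeeping computation, and the only thing to watch is the signs — the antihermicity of Clifford multiplication, the relation $e_3\cdot e_3=-1$, and matching each $\Sigma^{\pm\pm}$-component of $D\varphi$ with the correct term on the right-hand side of \eqref{dirac equation}. (Incidentally, for this lemma the non-vanishing of the four half-components is not used: if $\vec H(p)=0$ one picks $e_3$ arbitrarily and the $\vec H$-terms simply vanish.)
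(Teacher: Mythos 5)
Your computation is correct and is essentially the paper's intended argument: the authors omit the proof of this lemma, stating it is ``very similar'' to their proof of Lemma \ref{lemsym1}, which uses exactly your two ingredients (antihermicity of Clifford multiplication to move the $e_j$ across the pairing so that $D\varphi^{\pm\pm}$ appears, then substitution of the componentwise Dirac equation and the evaluation $\pre\langle\vec H\cdot\varphi^{\bullet},e_3\cdot\varphi^{\bullet}\rangle=|\vec H|\,|\varphi^{\bullet}|^2$). Your sign bookkeeping and the matching of the four components of $D\varphi$ with the four statements are all accurate.
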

\noindent
This second lemma gives the defect of symmetry:
\begin{lem}\label{lemsym1} We have
\begin{enumerate}
\item $F_{++}(e_1,e_2)=F_{++}(e_2,e_1)-2\pre\left<\lambda\varphi^{-+},e_1\cdot e_2\cdot e_3\cdot\varphi^{--}\right>,$
\item $F_{--}(e_1,e_2)=F_{--}(e_2,e_1)-2\pre\left<\lambda\varphi^{+-},e_1\cdot e_2\cdot e_3\cdot\varphi^{++}\right>,$
\item $F_{+-}(e_1,e_2)=F_{+-}(e_2,e_1)-2\pre\left<\lambda\varphi^{--},e_1\cdot e_2\cdot e_3\cdot\varphi^{-+}\right>,$
\item $F_{-+}(e_1,e_2)=F_{-+}(e_2,e_1)-2\pre\left<\lambda\varphi^{++},e_1\cdot e_2\cdot e_3\cdot\varphi^{+-}\right>.$
\end{enumerate}
\end{lem}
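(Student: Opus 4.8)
\emph{Proof proposal.} The plan is to obtain Lemma~\ref{lemsym1} (together with Lemma~\ref{first lemma tr}, which falls out for free) from a single comparison of the expansion of $\nabla_{e_i}\varphi^{\pm\pm}$ with the four component Dirac equations recorded at the start of this section. Throughout I keep fixed, as above, the point $p$, the frame $(e_3,e_4)$ of $E_p$ with $\vec H=|\vec H|\,e_3$ at $p$, and the assumption introduced there that the four components of $\varphi$ do not vanish at $p$; everything below is an identity at $p$.

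First I would prove assertion~1. Since $\bigl\{|\varphi^{--}|^{-1}e_1\cdot e_3\cdot\varphi^{--},\ |\varphi^{--}|^{-1}e_2\cdot e_3\cdot\varphi^{--}\bigr\}$ is an orthonormal frame of $\Sigma^{++}$, I can write $\nabla_{e_i}\varphi^{++}=|\varphi^{--}|^{-2}\sum_{j}F_{++}(e_i,e_j)\,e_j\cdot e_3\cdot\varphi^{--}$, apply $D=\sum_i e_i\cdot\nabla_{e_i}$, and split the resulting double sum: by $e_i\cdot e_i=-1$ the diagonal part collapses to $-|\varphi^{--}|^{-2}\trace(F_{++})\,e_3\cdot\varphi^{--}$, and by $e_1\cdot e_2=-e_2\cdot e_1$ the off-diagonal part collapses to $|\varphi^{--}|^{-2}\bigl(F_{++}(e_1,e_2)-F_{++}(e_2,e_1)\bigr)\,e_1\cdot e_2\cdot e_3\cdot\varphi^{--}$. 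Thus I would get
\[
D\varphi^{++}=\frac{-\trace(F_{++})}{|\varphi^{--}|^2}\,e_3\cdot\varphi^{--}+\frac{F_{++}(e_1,e_2)-F_{++}(e_2,e_1)}{|\varphi^{--}|^2}\,e_1\cdot e_2\cdot e_3\cdot\varphi^{--}.
\]
Next I would substitute $D\varphi^{++}=\vec H\cdot\varphi^{--}-2\lambda\varphi^{-+}=|\vec H|\,e_3\cdot\varphi^{--}-2\lambda\varphi^{-+}$ and project onto the two orthogonal unit directions $|\varphi^{--}|^{-1}e_3\cdot\varphi^{--}$ and $|\varphi^{--}|^{-1}e_1\cdot e_2\cdot e_3\cdot\varphi^{--}$ of $\Sigma^{-+}$ — their orthogonality being the same computation already used for the frames, namely $e_3\cdot e_1\cdot e_2\cdot e_3=-e_1\cdot e_2$ and $e_1\cdot e_2$ acting on $\varphi^{--}$ as multiplication by $\pm i$, so $\pre\langle e_3\cdot\varphi^{--},e_1\cdot e_2\cdot e_3\cdot\varphi^{--}\rangle=0$. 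The first projection gives assertion~1 of Lemma~\ref{first lemma tr}; the second, since $|\vec H|\,e_3\cdot\varphi^{--}$ contributes nothing to it, gives
\[
F_{++}(e_1,e_2)-F_{++}(e_2,e_1)=-2\,\pre\langle\lambda\varphi^{-+},\,e_1\cdot e_2\cdot e_3\cdot\varphi^{--}\rangle,
\]
which is assertion~1 of Lemma~\ref{lemsym1}. (There is a shortcut avoiding the frame expansion: using anti-hermiticity of Clifford multiplication by unit vectors together with $e_i\cdot e_i=-1$ one checks directly that $F_{++}(e_1,e_2)-F_{++}(e_2,e_1)=\pre\langle D\varphi^{++},e_1\cdot e_2\cdot e_3\cdot\varphi^{--}\rangle$, after which the same substitution finishes it; however the frame-expansion version is worth keeping since it simultaneously yields Lemma~\ref{first lemma tr}.)

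Assertions~2, 3 and 4 I would then obtain by repeating this argument verbatim with $\varphi^{--}$, $\varphi^{+-}$, $\varphi^{-+}$ in place of $\varphi^{++}$, using respectively $D\varphi^{--}=\vec H\cdot\varphi^{++}-2\lambda\varphi^{+-}$, $D\varphi^{+-}=\vec H\cdot\varphi^{-+}-2\lambda\varphi^{--}$, $D\varphi^{-+}=\vec H\cdot\varphi^{+-}-2\lambda\varphi^{++}$ and the corresponding orthonormal frames $\{e_j\cdot e_3\cdot\varphi^{++}\}$, $\{e_j\cdot e_3\cdot\varphi^{-+}\}$, $\{e_j\cdot e_3\cdot\varphi^{+-}\}$ of $\Sigma^{--}$, $\Sigma^{+-}$, $\Sigma^{-+}$. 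The step I expect to require the most care is the chirality and sign bookkeeping inside $\mathcal{C}l(M)\hat{\otimes}\mathcal{C}l(E)$: in each of the four cases one must verify which subbundle $\Sigma^{\pm\pm}$ contains $e_1\cdot e_2\cdot e_3\cdot\varphi^{\bullet\bullet}$ and that it is $\pre\langle\cdot,\cdot\rangle$-orthogonal to $e_3\cdot\varphi^{\bullet\bullet}$, so that the mean-curvature term again drops out of the relevant projection. Since each of these is identical in nature to an orthonormality computation already carried out in this section, I expect the obstacle to be purely organizational rather than substantial.
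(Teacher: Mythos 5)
Your proposal is correct and follows essentially the same route as the paper: both reduce the antisymmetric part of $F_{++}$ to $\pre\left<D\varphi^{++},e_1\cdot e_2\cdot e_3\cdot\varphi^{--}\right>$ and then substitute the Dirac equation, the key point being that $\pre\left<\vec H\cdot\varphi^{--},e_1\cdot e_2\cdot e_3\cdot\varphi^{--}\right>=0$. The parenthetical ``shortcut'' you mention is literally the paper's argument; your frame-expansion packaging is a harmless variant whose only extra payoff is delivering Lemma~\ref{first lemma tr} simultaneously.
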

For sake of brevity, we only prove Lemma \ref{lemsym1}. The proof of Lemma \ref{first lemma tr} is very similar.
\\
\\{\it Proof:}  We have
\beQ
F_{++}(e_1,e_2)&=&\pre\left<\nabla_{e_1}\varphi^{++},e_2\cdot e_3\cdot\varphi^{--}\right>\\
&=&\pre\left<e_1\cdot\nabla_{e_1}\varphi^{++},e_1\cdot e_2\cdot e_3\cdot\varphi^{--}\right>\\
&=&\pre\left<\vec{H}\cdot\varphi^{--} -2\lambda\varphi^{-+}-e_2\cdot\nabla_{e_2}\varphi^{++},e_1\cdot e_2\cdot e_3\cdot\varphi^{--}\right>.
\eeQ
The first term is
\beQ
\pre\left<\vec{H}\cdot\varphi^{--},e_1\cdot e_2\cdot e_3\cdot\varphi^{--}\right>&=&-\pre\left<\varphi^{--},i\vec{H}\cdot e_3\cdot\varphi^{--}\right>\\
&=&-\pre\left(i|\vec{H}||\varphi^{--}|^2\right)=0.
\eeQ
Hence, we get
\beQ
F_{++}(e_1,e_2)+2\pre\left<\lambda\varphi^{-+},e_1\cdot e_2\cdot e_3\cdot\varphi^{--}\right>&=&-\pre\left<e_2\cdot\nabla_{e_2}\varphi^{++},e_1\cdot e_2\cdot e_3\cdot\varphi^{--}\right>\\
&=&\pre\left<\nabla_{e_2}\varphi^{++},e_2\cdot e_1\cdot e_2\cdot e_3\cdot\varphi^{--}\right>\\
&=&\pre\left<\nabla_{e_2}\varphi^{++},e_1\cdot e_3\cdot\varphi^{--}\right>\\
&=&F_{++}(e_2,e_1).
\eeQ
The proof is similar for the three other forms.
$\finpreuve$\\

By analogous computations, we also get the following lemmas:
\begin{lem} We have
\begin{enumerate}
\item $tr(B_{++})=-2\pre\left<\lambda\varphi^{-+},e_3\cdot\varphi^{--}\right>,$
\item $tr(B_{--})=-2\pre\left<\lambda\varphi^{+-},e_3\cdot\varphi^{++}\right>,$
\item $tr(B_{+-})=-2\pre\left<\lambda\varphi^{--},e_3\cdot\varphi^{-+}\right>,$
\item $tr(B_{-+})=-2\pre\left<\lambda\varphi^{++},e_3\cdot\varphi^{+-}\right>.$
\end{enumerate}
\end{lem}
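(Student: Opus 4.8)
The plan is to deduce all four identities from the two purely algebraic facts already used in the previous lemmas: Clifford multiplication by a vector of $TM\oplus E$ is antihermitian for $\langle\cdot,\cdot\rangle$, and $X\cdot X=-|X|^2$ for such a vector; here only the unit tangent vectors $e_1,e_2$ intervene, with $e_i\cdot e_i=-1$. Since the trace of a bilinear form is independent of the chosen orthonormal basis, I would compute $\mathrm{tr}(B_{++})=B_{++}(e_1,e_1)+B_{++}(e_2,e_2)$ for an arbitrary orthonormal basis $(e_1,e_2)$ of $T_pM$ (the vector $e_3\in E_p$ being the fixed one entering the definition of $B_{++}$; unlike in Lemma \ref{first lemma tr}, no adaptedness of the frame and no use of the Dirac equation is needed).

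Concretely, from the definition
$$B_{++}(e_i,e_i)=-\pre\left\langle\lambda e_i\cdot\varphi^{-+},e_i\cdot e_3\cdot\varphi^{--}\right\rangle,$$
I would move one factor $e_i$ from the second argument to the first using antihermiticity, let it pass through the constant $\lambda$ by $\C$-linearity of Clifford multiplication, and then use $e_i\cdot e_i=-1$, obtaining
$$\left\langle\lambda e_i\cdot\varphi^{-+},e_i\cdot e_3\cdot\varphi^{--}\right\rangle=-\left\langle\lambda e_i\cdot e_i\cdot\varphi^{-+},e_3\cdot\varphi^{--}\right\rangle=\left\langle\lambda\varphi^{-+},e_3\cdot\varphi^{--}\right\rangle,$$
a quantity that does not depend on $i$. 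Summing over $i=1,2$ then gives $\mathrm{tr}(B_{++})=-2\pre\left\langle\lambda\varphi^{-+},e_3\cdot\varphi^{--}\right\rangle$, which is the first identity.

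The remaining three identities are obtained verbatim by the same two lines, merely permuting the components $\varphi^{++},\varphi^{--},\varphi^{+-},\varphi^{-+}$ as dictated by the definitions of $B_{--}$, $B_{+-}$ and $B_{-+}$; the argument does not distinguish between them. I do not anticipate any genuine obstacle here: the computation is entirely formal. The only points deserving a little care are the signs produced by the Clifford product being antihermitian rather than hermitian, and the fact that $\lambda$ may be purely imaginary — which is harmless, since $\lambda$ stays inside the hermitian pairing, which is $\C$-linear in its first slot, and so is never pulled across $\pre$.
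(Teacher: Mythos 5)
Your proof is correct: each diagonal term $B_{\pm\pm}(e_i,e_i)$ reduces by one application of antihermiticity of Clifford multiplication and $e_i\cdot e_i=-1$ to $-\pre\langle\lambda\varphi^{\cdot\cdot},e_3\cdot\varphi^{\cdot\cdot}\rangle$ independently of $i$, and summing gives the stated traces; the paper itself omits the argument (``by analogous computations''), and yours is exactly the intended one. Your side remarks are also accurate — in particular that, unlike for the forms $F_{\pm\pm}$, no use of the Dirac equation is needed here, and that $\lambda$ (possibly imaginary) never leaves the $\C$-linear first slot of the hermitian product.
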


\begin{lem} We have
\begin{enumerate}\label{lemsym2}
\item $B_{++}(e_1,e_2)=B_{++}(e_2,e_1)+2\pre\left<\lambda\varphi^{-+},e_1\cdot e_2\cdot e_3\cdot\varphi^{--}\right>,$
\item $B_{--}(e_1,e_2)=B_{--}(e_2,e_1)+2\pre\left<\lambda\varphi^{+-},e_1\cdot e_2\cdot e_3\cdot\varphi^{++}\right>,$
\item $B_{+-}(e_1,e_2)=B_{+-}(e_2,e_1)+2\pre\left<\lambda\varphi^{--},e_1\cdot e_2\cdot e_3\cdot\varphi^{-+}\right>,$
\item $B_{-+}(e_1,e_2)=B_{-+}(e_2,e_1)+2\pre\left<\lambda\varphi^{++},e_1\cdot e_2\cdot e_3\cdot\varphi^{+-}\right>.$
\end{enumerate}
\end{lem}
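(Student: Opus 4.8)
The plan is to notice that, unlike the forms $F_{\pm\pm}$ treated in Lemma \ref{lemsym1}, the bilinear maps $B_{\pm\pm}$ contain no covariant derivative, so no appeal to the Dirac equation $(\ref{dirac equation})$ is needed: the defect of symmetry can be read off purely from the Clifford relations $e_i\cdot e_j=-e_j\cdot e_i$ (for $e_i\perp e_j$), $e_i\cdot e_i=-1$, together with the fact, recalled in the Preliminaries, that Clifford multiplication by a vector of $TM\oplus E$ is antihermitian and that the hermitian product is $\C$-linear in the first entry. Concretely, I would show that each $B_{\pm\pm}$ is antisymmetric on the pair $(e_1,e_2)$ and then rewrite $2B_{\pm\pm}(e_1,e_2)$ in the shape occurring on the right-hand side. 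It is enough to carry this out for the first identity; the remaining three follow by the same two lines after the evident relabelling of the four components $\varphi^{++},\varphi^{--},\varphi^{+-},\varphi^{-+}$.

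For the first identity I would start from the definition
$$B_{++}(e_1,e_2)=-\pre\left<\lambda e_1\cdot\varphi^{-+},e_2\cdot e_3\cdot\varphi^{--}\right>,$$
pull $\lambda$ out of the hermitian pairing, and move $e_1$ to the other slot using antihermiticity; this produces a sign and yields
$$B_{++}(e_1,e_2)=\pre\left<\lambda\varphi^{-+},e_1\cdot e_2\cdot e_3\cdot\varphi^{--}\right>.$$
Applying the same manipulation to $B_{++}(e_2,e_1)$ and then using $e_2\cdot e_1=-e_1\cdot e_2$ gives $B_{++}(e_2,e_1)=-\pre\left<\lambda\varphi^{-+},e_1\cdot e_2\cdot e_3\cdot\varphi^{--}\right>=-B_{++}(e_1,e_2)$. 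Subtracting, $B_{++}(e_1,e_2)-B_{++}(e_2,e_1)=2B_{++}(e_1,e_2)=2\pre\left<\lambda\varphi^{-+},e_1\cdot e_2\cdot e_3\cdot\varphi^{--}\right>$, which is exactly the asserted formula. The identities for $B_{--}$, $B_{+-}$, $B_{-+}$ are obtained verbatim, with $(\varphi^{-+},\varphi^{--})$ replaced by $(\varphi^{+-},\varphi^{++})$, $(\varphi^{--},\varphi^{-+})$ and $(\varphi^{++},\varphi^{+-})$ respectively.

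I do not expect any genuine obstacle here: the whole argument is a short Clifford-algebra computation, parallel to and in fact slightly simpler than that of Lemma \ref{lemsym1}. The only points deserving a little care are the sign bookkeeping — in particular checking that nothing changes when $\lambda\in i\R$, which is clear since $\lambda$ is extracted from the pairing before the real part is taken and $\pre$ is $\R$-linear — and keeping track of which of the subbundles $\Sigma^{\pm\pm}$ the Clifford multiplications by $e_1,e_2,e_3$ send each component $\varphi^{\pm\pm}$ into, so that all the pairings above are between elements of one and the same $\Sigma^{\pm\pm}$.
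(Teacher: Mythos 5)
Your proof is correct, and it is essentially the computation the paper intends: the authors omit it with the remark that it is ``analogous'' to the proof of Lemma \ref{lemsym1}, and your argument is exactly that Clifford manipulation (pull the vector across the pairing by antihermiticity, then use $e_2\cdot e_1=-e_1\cdot e_2$), only simpler because the $B_{\pm\pm}$ contain no derivatives and hence the Dirac equation is never invoked. You in fact prove the slightly stronger statement $B_{\pm\pm}(e_1,e_2)=-B_{\pm\pm}(e_2,e_1)=\pre\left<\lambda\varphi^{\cdot\cdot},e_1\cdot e_2\cdot e_3\cdot\varphi^{\cdot\cdot}\right>$, of which the asserted identity is an immediate consequence, and your sign and subbundle bookkeeping checks out.
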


Now, we set $$\left\{\begin{array}{l}A_{++}:=F_{++}+B_{++},\\
A_{--}:=F_{--}+B_{--},\\A_{+-}:=F_{+-}+B_{+-},\\
A_{-+}:=F_{-+}+B_{-+},
\end{array}\right.$$
and 
$$F_+=\dfrac{A_{++}}{|\varphi^{--}|^2}-\dfrac{A_{--}}{|\varphi^{++}|^2}\hspace{1cm}\mbox{and}\hspace{1cm}F_-=\dfrac{A_{+-}}{|\varphi^{-+}|^2}-\dfrac{A_{-+}}{|\varphi^{+-}|^2}.$$
From the last four lemmas we deduce immediately that  $F_+$ and $F_-$ are symmetric and trace-free. Moreover, by a direct computation using the conditions (\ref{condphi+-}) on the norms of $\varphi^+$ and $\varphi^-$, we get the following lemma:

\begin{lem}\label{lem F=0}
 The symmetric operators $F^+$ and $F^-$ of $TM$ associated to the bilinear forms $F_+$ and $F_-,$ defined by
$$F^+(X)=F_+(X,e_1)e_1+F_+(X,e_2)e_2\hspace{.5cm}\mbox{and}\hspace{.5cm}F^-(X)=F_-(X,e_1)e_1+F_-(X,e_2)e_2$$
for all $X\in TM,$ satisfy
\begin{enumerate}
\item $\pre\left<F^+(X)\cdot e_3\cdot\varphi^{--},\varphi^{++}\right>=0,$
\item $\pre\left<F^-(X)\cdot e_3\cdot\varphi^{-+},\varphi^{+-}\right>=0.$
\end{enumerate}
\end{lem}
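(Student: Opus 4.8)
\textbf{Proof proposal for Lemma \ref{lem F=0}.}

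The plan is to compute both sides directly from the definitions of $F_+$, $F_-$ and the $A$-forms, expand using the Dirac equation \eqref{dirac equation} and the Gauss-type relation \eqref{eqnnablaphi}, and then use the pointwise conditions \eqref{condphi+-} on the norms $|\varphi^+|^2$ and $|\varphi^-|^2$ to cancel everything. First I would fix $X\in T_pM$ and expand $F^+(X)$ using the orthonormal frame $\{e_1,e_2\}$, so that $\pre\left<F^+(X)\cdot e_3\cdot\varphi^{--},\varphi^{++}\right>$ becomes a combination of the scalars $F_+(X,e_1)\pre\left<e_1\cdot e_3\cdot\varphi^{--},\varphi^{++}\right>$ and $F_+(X,e_2)\pre\left<e_2\cdot e_3\cdot\varphi^{--},\varphi^{++}\right>$. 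But $\{e_1\cdot e_3\cdot\varphi^{--}/|\varphi^{--}|,\, e_2\cdot e_3\cdot\varphi^{--}/|\varphi^{--}|\}$ is an orthonormal frame of $\Sigma^{++}$ for $\pre\left<\cdot,\cdot\right>$ (this was established just above in the excerpt), so writing $\varphi^{++}$ in this frame shows that the whole expression in statement (1) is, up to the factor $|\varphi^{--}|^{-2}$ absorbed in the definition of $F_+$, exactly the bilinear form $F_+$ evaluated on $(X,\cdot)$ against $\varphi^{++}$ — that is, the claim is a reformulation of an identity among the scalar quantities $A_{++}(X,Y)$, $A_{--}(X,Y)$, rather than anything genuinely new about Clifford algebra.

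Next I would unwind $A_{++}(X,Y) = F_{++}(X,Y)+B_{++}(X,Y) = \pre\left<\nabla_X\varphi^{++} - \lambda X\cdot\varphi^{-+},\, Y\cdot e_3\cdot\varphi^{--}\right>$ and similarly $A_{--}(X,Y) = \pre\left<\nabla_X\varphi^{--} - \lambda X\cdot\varphi^{+-},\, Y\cdot e_3\cdot\varphi^{++}\right>$. The point is that $\nabla_X\varphi^{++} - \lambda X\cdot\varphi^{-+}$ is precisely the $\Sigma^{++}$-component of $\nabla_X\varphi - \lambda X\cdot\varphi$, because $\lambda X\cdot\varphi^{-+}$ lies in $\Sigma^{++}$ when $X\in TM$ (Clifford multiplication by a tangent vector sends $\Sigma^{-+}=\Sigma^-M\otimes\Sigma^+E$ to $\Sigma^+M\otimes\Sigma^+E=\Sigma^{++}$). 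So $A_{++}$ is the pairing of the $\Sigma^{++}$-part of $(\nabla_X - \lambda X\cdot)\varphi$ against the $\Sigma^{++}$-frame vector $Y\cdot e_3\cdot\varphi^{--}$, and likewise $A_{--}$ for the minus-minus part. Then $F_+(X,Y) = A_{++}(X,Y)/|\varphi^{--}|^2 - A_{--}(X,Y)/|\varphi^{++}|^2$ is, after re-expressing $\varphi^{++}$ and $\varphi^{--}$ in their respective orthonormal frames, a difference that measures the failure of a single ``shape operator candidate'' to be consistently read off from $\varphi^{++}$ versus $\varphi^{--}$. The role of the norm conditions \eqref{condphi+-} is exactly to force $|\varphi^{++}|$ and $|\varphi^{--}|$ (and their derivatives) to be compatible: differentiating $|\varphi^+|^2 = |\varphi^{++}|^2 + |\varphi^{--}|^2$ and using \eqref{condphi+-} together with the Dirac equation in the split form (the four equations for $D\varphi^{\pm\pm}$ recalled at the start of Section \ref{secpflem}) yields the relations among $\pre\left<\nabla_X\varphi^{++},\varphi^{--}\cdot(\cdots)\right>$-type terms that make the combination in statement (1) collapse to zero. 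I would carry out this differentiation, substitute, and check the cancellation; statement (2) is identical after swapping the roles of the $(++,--)$ pair with the $(+-,-+)$ pair and of $\varphi^+$ with $\varphi^-$.

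The main obstacle I expect is bookkeeping rather than conceptual: keeping track of signs and of which Clifford products are (anti)hermitian, and correctly handling the terms coming from $\nabla_X(e_3)$, since $e_3$ was only defined pointwise at $p$ by $\vec H = |\vec H| e_3$ and need not be parallel — so when $X$-derivatives hit the frame vectors $Y\cdot e_3\cdot\varphi^{\mp\mp}$ one must be careful that those connection terms either cancel in pairs or are absorbed into the definition of the relevant bilinear forms. I would sidestep part of this by working at the fixed point $p$ with a frame $(e_1,e_2)$ of $TM$ chosen so that $\nabla e_j|_p = 0$ (as was done for the Codazzi computation earlier), which kills the tangential connection terms; the normal connection terms involving $e_3,e_4$ must then be tracked explicitly, but they enter $F_{\pm\pm}$ and $B_{\pm\pm}$ symmetrically and drop out of the trace-free symmetric combinations $F_+,F_-$. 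Once the expansion is organized this way, the conditions \eqref{condphi+-} plug in and the two stated identities fall out by inspection.
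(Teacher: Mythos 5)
Your proposal is correct and follows essentially the same route as the paper: expand $\nabla_X\varphi^{\pm\pm}-\lambda X\cdot\varphi^{\mp\pm}$ in the orthonormal frames $e_j\cdot e_3\cdot\varphi^{\mp\mp}/|\varphi^{\mp\mp}|$, so that statement (1) collapses to $\pre\langle\nabla_X\varphi^{+}-\lambda X\cdot\varphi^{-},\varphi^{+}\rangle$ (the antihermitian sign flip $\pre\langle e_j\cdot e_3\cdot\varphi^{++},\varphi^{--}\rangle=-\pre\langle e_j\cdot e_3\cdot\varphi^{--},\varphi^{++}\rangle$ turning the difference defining $F_+$ into a sum), which vanishes by the first condition in (\ref{condphi+-}). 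The only superfluous ingredients are the split Dirac equation and the worry about $\nabla_X e_3$: the identity is purely pointwise at $p$ and $e_3$ is never differentiated in it.
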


\begin{proof}
Since 
$$A_{++}(X,Y)=\Re e\ \langle\nabla_X\varphi^{++}-\lambda X\cdot\varphi^{-+},Y\cdot e_3\cdot\varphi^{--}\rangle,$$ 
and since $(e_1\cdot e_3\cdot\frac{\varphi^{--}}{|\varphi^{--}|},e_2\cdot e_3\cdot\frac{\varphi^{--}}{|\varphi^{--}|})$ is an orthonormal frame of $\Sigma^{++},$ we have
\begin{eqnarray*}
&&\Re e\ \langle\nabla_X\varphi^{++}-\lambda X\cdot\varphi^{-+},\varphi^{++}\rangle\\= &&
\frac{A_{++}}{|\varphi^{--}|^2}(X,e_1)\ \Re e\ \langle e_1\cdot e_3\cdot\varphi^{--},\varphi^{++}\rangle+\frac{A_{++}}{|\varphi^{--}|^2}(X,e_2)\ \Re e\ \langle e_2\cdot e_3\cdot\varphi^{--},\varphi^{++}\rangle.
\end{eqnarray*}
Similarly,
\begin{eqnarray*}
&&\Re e\ \langle\nabla_X\varphi^{--}-\lambda X\cdot\varphi^{+-},\varphi^{--}\rangle\\ &&=
\frac{A_{--}}{|\varphi^{++}|^2}(X,e_1)\ \Re e\ \langle e_1\cdot e_3\cdot\varphi^{++},\varphi^{--}\rangle+\frac{A_{--}}{|\varphi^{++}|^2}(X,e_2)\ \Re e\ \langle e_2\cdot e_3\cdot\varphi^{++},\varphi^{--}\rangle
\\ && =-
\frac{A_{--}}{|\varphi^{++}|^2}(X,e_1)\ \Re e\ \langle e_1\cdot e_3\cdot\varphi^{--},\varphi^{++}\rangle-\frac{A_{--}}{|\varphi^{++}|^2}(X,e_2)\ \Re e\ \langle e_2\cdot e_3\cdot\varphi^{--},\varphi^{++}\rangle.
\end{eqnarray*}
These two formulas imply that
$$\pre\left<F^+(X)\cdot e_3\cdot\varphi^{--},\varphi^{++}\right>= \Re e\ \langle\nabla_X\varphi^{+}-\lambda X\cdot\varphi^{-},\varphi^{+}\rangle;$$
by the first condition in (\ref{condphi+-}), this last expression is zero.
\end{proof}
Hence, the operators $F^+$ and $F^-$ are of rank at most $\leq 1.$ Since they are symmetric and trace-free, they vanish identically.\\\\
Using again that $(e_1\cdot e_3\cdot\frac{\varphi^{--}}{|\varphi^{--}|},e_2\cdot e_3\cdot\frac{\varphi^{--}}{|\varphi^{--}|})$ is an orthonormal frame of $\Sigma^{++},$ we have
$$\nabla_X\varphi^{++}=F_{++}(X,e_1)e_1\cdot e_3\cdot\frac{\varphi^{--}}{|\varphi^{--}|}+F_{++}(X,e_2)e_2\cdot e_3\cdot\frac{\varphi^{--}}{|\varphi^{--}|}.$$
Since $F_{++}=A_{++}-B_{++}$ and denoting by $A^{++}$ and $B^{++}$ the symmetric operators of $TM$ associated to $A_{++}$ and $B_{++}$ and defined by
$$A^{++}(X)=A_{++}(X,e_1)e_1+A_{++}(X,e_2)e_2,\ B^{++}(X)=B_{++}(X,e_1)e_1+B_{++}(X,e_2)e_2,$$
we get
\begin{equation}\label{nabla phi ++}
\nabla_X\varphi^{++}=\frac{1}{|\varphi^{--}|^2}\left[A^{++}(X)\cdot e_3\cdot\varphi^{--}-B^{++}(X)\cdot e_3\cdot\varphi^{--}\right].
\end{equation}
Similarly, if $A^{--}$ and $B^{--}$ denote the symmetric operators of $TM$ associated to $A_{--}$ and $B_{--},$ we have
\begin{equation}\label{nabla phi --}
\nabla_X\varphi^{--}=\frac{1}{|\varphi^{++}|^2}\left[A^{--}(X)\cdot e_3\cdot\varphi^{++}-B^{--}(X)\cdot e_3\cdot\varphi^{++}\right].
\end{equation}
Moreover, we easily get
$$B^{++}(X)\cdot e_3\cdot\varphi^{--}=-|\varphi^{--}|^2\lambda X\cdot\varphi^{-+}\hspace{.2cm}\mbox{and}\hspace{.2cm}B^{--}(X)\cdot e_3\cdot\varphi^{++}=-|\varphi^{++}|^2\lambda X\cdot\varphi^{+-}.$$
Thus
\begin{eqnarray*}
\nabla_X\varphi^{+}&=&\frac{1}{|\varphi^{--}|^2}A^{++}(X)\cdot e_3\cdot\varphi^{--}+\lambda X\cdot\varphi^{-+}\\
&&+\ \frac{1}{|\varphi^{++}|^2}A^{--}(X)\cdot e_3\cdot\varphi^{++}+\lambda X\cdot\varphi^{+-}.
\end{eqnarray*}
Setting $A^+=A^{++}+A^{--}$ we get from the definition of $A^{++}$ and $A^{--}$ and from  $F^+=0$ that $\frac{A^{++}}{|\varphi^{--}|^2}=\frac{A^{--}}{|\varphi^{++}|^2}$. Bearing in mind that $|\varphi^+|^2=|\varphi^{++}|^2+|\varphi^{--}|^2$, we get finally
\begin{equation}\label{A+}
\frac{A^+}{|\varphi^+|^2}=\frac{A^{++}}{|\varphi^{--}|^2}=\frac{A^{--}}{|\varphi^{++}|^2}.
 \end{equation} 
 Thus 
 \begin{equation}\label{nabla phi +}
 \nabla_X\varphi^{+}=\frac{1}{|\varphi^+|^2}A^+(X)\cdot e_3\cdot\varphi^++\lambda X\cdot\varphi^-.
  \end{equation} 
Similarly, denoting by $A^{+-}$ and $A^{-+}$ the symmetric operators of $TM$ associated to $A_{+-}$ and $A_{-+},$ setting $A^-=A^{+-}+A^{-+}$ and using $F^-=0$ we get
\begin{eqnarray}
\nabla_X\varphi^-&=&\frac{1}{|\varphi^{+-}|^2}A^{-+}(X)\cdot e_3\cdot\varphi^{+-}+\lambda X\cdot\varphi^{++}\nonumber\\
&&+\ \frac{1}{|\varphi^{-+}|^2}A^{+-}(X)\cdot e_3\cdot\varphi^{-+}+\lambda X\cdot\varphi^{--}\nonumber\\
&=&\frac{1}{|\varphi^-|^2}A^-(X)\cdot e_3\cdot\varphi^-+\lambda X\cdot\varphi^+.\label{nabla phi -}
\end{eqnarray}
We now observe that formulas (\ref{nabla phi +}) and (\ref{nabla phi -}) also hold if $\varphi^{++}$ or $\varphi^{--},$ (resp. $\varphi^{+-}$ or $\varphi^{-+}$) vanishes at $p:$ indeed, assuming for instance that $\varphi^{++}(p)=0,$ and thus that $\varphi^{--}(p)\neq 0$ since $\varphi^+(p)\neq 0,$ equation (\ref{nabla phi ++}) holds, and, from the first condition in (\ref{condphi+-}),
$$\Re e\ \langle\nabla_X\varphi^{--}-\lambda X\cdot\varphi^{+-},\varphi^{--}\rangle=0.$$
Since $\left(\frac{\varphi^{--}}{|\varphi^{--}|},i\frac{\varphi^{--}}{|\varphi^{--}|}\right)$ is an orthonormal basis of $\Sigma^{--},$ we deduce that
$$\nabla_X\varphi^{--}-\lambda X\cdot\varphi^{+-}=i\alpha(X)\frac{\varphi^{--}}{|\varphi^{--}|}$$
for some real 1-form $\alpha.$ Since $D\varphi^{--}+2\lambda\varphi^{+-}=0$ ($\varphi^{++}=0$ at $p$),  this implies that
$$\left(\alpha(e_1)e_1+\alpha(e_2)e_2\right)\cdot \frac{\varphi^{--}}{|\varphi^{--}|}=0,$$
and thus that $\alpha=0.$ We thus get $\nabla_X\varphi^{--}=\lambda X\cdot\varphi^{+-}$ instead of (\ref{nabla phi --}), which, together with (\ref{nabla phi ++}), easily implies (\ref{nabla phi +}).
\\

Now, we set
$$\eta^+(X)=\left(\frac{1}{|\varphi^+|^2}A^+(X)\cdot e_3\right)^+\hspace{.5cm}\mbox{and}\hspace{.5cm}\eta^-(X)=\left(\frac{1}{|\varphi^-|^2}A^-(X)\cdot e_3\right)^-$$
where,  if $\sigma$ belongs to $\mathcal{C}l^0(TM\oplus E)$, we denote by $\sigma^+:=\frac{1+\omega_4}{2}\cdot\sigma$ and by $\sigma^-:=\frac{1-\omega_4}{2}\cdot\sigma$ the parts of $\sigma$ acting on $\Sigma^+$ and on $\Sigma^-$ only, i.e., such that
$$\sigma^+\cdot\varphi=\sigma\cdot\varphi^+\ \in\ \Sigma^+\hspace{.5cm}\mbox{and}\hspace{.5cm}\sigma^-\cdot\varphi=\sigma\cdot\varphi^-\ \in\ \Sigma^-.$$
Setting $\eta=\eta^++\eta^-$ we thus get
$$\nabla_X\varphi=\eta(X)\cdot\varphi+\lambda X\cdot\varphi,$$
as claimed in Lemma \ref{lem1}.

Explicitely, setting $A_+(X,Y):=\langle A^+(X),Y\rangle$ and $A_-(X,Y):=\langle A^-(X),Y\rangle,$ the form $\eta$ is given by
\begin{eqnarray*}
\eta(X)&=&\frac{1}{2|\varphi^+|^2}\left[A_+(X,e_1)(e_1\cdot e_3-e_2\cdot e_4)+A_+(X,e_2)(e_2\cdot e_3+e_1\cdot e_4)\right]\\
&&+\frac{1}{2|\varphi^-|^2}\left[A_{-}(X,e_1)(e_1\cdot e_3+e_2\cdot e_4)+A_-(X,e_2)(e_2\cdot e_3-e_1\cdot e_4)\right]
\end{eqnarray*}
with 
$$A_+(X,Y)=\pre \left<\nabla_X\varphi^+-\lambda X\cdot\varphi^-,Y\cdot e_3\cdot\varphi^+\right>$$
and
$$A_-(X,Y)=\pre \left<\nabla_X\varphi^--\lambda X\cdot\varphi^+,Y\cdot e_3\cdot\varphi^-\right>.$$
By direct computations we get that 
$$B(X,Y):=X\cdot \eta(Y)-\eta(Y)\cdot X$$
is a vector belonging to $E$ which is such that
\begin{eqnarray*}
\left<B(X,Y),\xi\right>&=&\frac{1}{|\varphi^+|^2}\pre\left<X\cdot\nabla_Y\varphi^+-\lambda X\cdot Y\cdot\varphi^-,\xi\cdot\varphi^+\right>\\
&&+\frac{1}{|\varphi^-|^2}\pre\left<X\cdot\nabla_Y\varphi^--\lambda X\cdot Y\cdot\varphi^+,\xi\cdot\varphi^-\right>
\end{eqnarray*}
for all $\xi\in E.$ This last expression appears to be symmetric in $X,Y$ (the proof is analogous to the proof of the symmetry of $A_{++}=F_{++}+B_{++}$ above). Computing 
$$\left<B(X,Y),\xi\right>=\frac{1}{2}\left(\left<B(X,Y),\xi\right>+\left<B(Y,X),\xi\right>\right)$$
we finally obtain that $B$ is given by formula (\ref{B function phi}).

Since $B(e_j,X)=e_j\cdot\eta(X)-\eta(X)\cdot e_j,$ we obtain
\begin{equation*}
\sum_{j=1,2} e_j\cdot B(e_j,X)=-2\eta(X)-\sum_{j=1,2} e_j\cdot\eta(X)\cdot e_j.
\end{equation*}
Writing $\eta(X)$ in the form $\sum_k e_k\cdot n_k$ for some vectors $n_k$ belonging to $E$, we easily get that $\sum_j e_j\cdot\eta(X)\cdot e_j=0$. Thus
$$\eta(X)=-\frac{1}{2}\sum_{j=1,2} e_j\cdot B(e_j,X).$$
The last claim in Lemma \ref{lem1} is proved.
$\finpreuve$\\
\section{Weierstrass representation of surfaces in $\R^4$}
We are interested here in isometric immersions in euclidean space $\R^4$ (thus $c=\lambda=0$); we obtain that the immersions are given by a formula which generalizes the representation formula given by T. Friedrich in \cite{Fr}. Such a formula was also found in \cite{Fr0} using a different method involving twistor theory.  
\\

We consider the scalar product $\langle\langle.,.\rangle\rangle$ defined on $\Sigma^+$ by
$$\langle\langle.,.\rangle\rangle:\begin{array}[t]{rcl}\Sigma^+\times\Sigma^+&\rightarrow&\HH\\(\varphi^+,\psi^+)&\mapsto&\overline{[\psi^+]}[\varphi^+],\end{array}$$
where $[\varphi^+]$ and $[\psi^+]$ $\in\HH$ represent the spinors $\varphi^+$ and $\psi^+$ in some frame, and where, if $q=q_1\it{1}+q_2I+q_3J+q_4K$ belongs to $\HH,$ 
$$\overline{q}=q_1\it{1}-q_2I-q_3J-q_4K.$$ 
We also define the product $\langle\langle.,.\rangle\rangle$ on $\Sigma^-$ by an analogous formula:
$$\langle\langle.,.\rangle\rangle:\begin{array}[t]{rcl}\Sigma^-\times\Sigma^-&\rightarrow&\HH\\(\varphi^-,\psi^-)&\mapsto&\overline{[\psi^-]}[\varphi^-].\end{array}$$
The following properties hold: for all $\varphi,\psi\in\Sigma$ and all $X\in TM\oplus E,$
\begin{equation}\label{pty pairing1}
\langle\langle\varphi^+,\psi^+\rangle\rangle=\overline{\langle\langle\psi^+,\varphi^+\rangle\rangle},\hspace{1cm}\langle\langle\varphi^-,\psi^-\rangle\rangle=\overline{\langle\langle\psi^-,\varphi^-\rangle\rangle}
\end{equation}
and
\begin{equation}\label{pty pairing2}
\langle\langle X\cdot \varphi^+,\psi^-\rangle\rangle=-\langle\langle\varphi^+,X\cdot \psi^-\rangle\rangle.
\end{equation}
Assume that we have a spinor $\varphi$ solution of the Dirac equation $D\varphi=\vec{H}\cdot\varphi$ so that $|\varphi^+|=|\varphi^-|=1,$ and define the $\HH$-valued $1$-form $\xi$ by
$$\xi(X)=\langle\langle X\cdot\varphi^-,\varphi^+\rangle\rangle\hspace{1cm}\in\hspace{.5cm}\HH.$$
\begin{prop}
The form $\xi\in\Omega^1(M,\HH)$ is closed.
\end{prop}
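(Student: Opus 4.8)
Since the statement is local, I would first fix $p\in M$ and a local orthonormal frame $(e_1,e_2)$ of $TM$ that is synchronous at $p$, that is $\nabla e_i|_p=0$; then $[e_1,e_2]_p=0$, so it suffices to show that $d\xi(e_1,e_2)=e_1(\xi(e_2))-e_2(\xi(e_1))$ vanishes at $p$. The first step is to differentiate $\xi(e_j)=\langle\langle e_j\cdot\varphi^-,\varphi^+\rangle\rangle$. For this I would use that the quaternionic pairing $\langle\langle\cdot,\cdot\rangle\rangle$ is parallel with respect to the spinorial connection $\nabla$ (the transition functions between spinorial frames are unit quaternions, and $\overline{[\psi]}\,[\varphi]$ is invariant under $[\varphi]\mapsto a[\varphi]$, $[\psi]\mapsto a[\psi]$ with $|a|=1$), together with the parallelism of Clifford multiplication and $\nabla e_j|_p=0$, to get, at $p$,
\[ e_1(\xi(e_2))=\langle\langle e_2\cdot\nabla_{e_1}\varphi^-,\varphi^+\rangle\rangle+\langle\langle e_2\cdot\varphi^-,\nabla_{e_1}\varphi^+\rangle\rangle, \]
and the analogous identity with $e_1$ and $e_2$ exchanged.

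Since here $c=\lambda=0$ and $|\varphi^+|=|\varphi^-|=1$, condition \eqref{condphi+-} holds trivially, so Lemma \ref{lem1} applies and gives $\nabla_X\varphi=\eta(X)\cdot\varphi$ with $\eta(X)=-\frac{1}{2}\sum_j e_j\cdot B(e_j,X)\in\mathcal{C}l^0(TM\oplus E)$. Because $\mathcal{C}l^0$ and $\nabla$ both preserve the splitting $\Sigma=\Sigma^+\oplus\Sigma^-$, I may replace $\nabla_{e_i}\varphi^{\pm}$ by $\eta(e_i)\cdot\varphi^{\pm}$, which expresses $d\xi(e_1,e_2)$ as a sum of four terms. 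The crucial step is then to move $\eta$ off $\varphi^+$ in the two terms where it acts on $\varphi^+$: using that $\eta(e_1)=\sum_j e_j\cdot n_j$ with $n_j=-\frac{1}{2}B(e_j,e_1)\in E$, applying \eqref{pty pairing2} (together with \eqref{pty pairing1}) once to pass each $e_j$ across the pairing and once to pass each $n_j$ across, and using the anticommutation $e_j\cdot n_j=-n_j\cdot e_j$ (valid since $TM\perp E$ in the Clifford algebra), I expect to obtain
\[ \langle\langle e_2\cdot\varphi^-,\eta(e_1)\cdot\varphi^+\rangle\rangle=-\langle\langle\eta(e_1)\cdot e_2\cdot\varphi^-,\varphi^+\rangle\rangle, \]
and likewise with $e_1$ and $e_2$ exchanged.

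Substituting this back, the four terms combine into
\[ d\xi(e_1,e_2)=\langle\langle(e_2\cdot\eta(e_1)-\eta(e_1)\cdot e_2)\cdot\varphi^-,\varphi^+\rangle\rangle-\langle\langle(e_1\cdot\eta(e_2)-\eta(e_2)\cdot e_1)\cdot\varphi^-,\varphi^+\rangle\rangle. \]
By the identity $X\cdot\eta(Y)-\eta(Y)\cdot X=B(X,Y)$ recorded in the proof of Proposition \ref{proptothm1} (an immediate consequence of \eqref{relation eta B} and the Clifford relations, since $TM\perp E$), the right-hand side equals $\langle\langle(B(e_2,e_1)-B(e_1,e_2))\cdot\varphi^-,\varphi^+\rangle\rangle$, which vanishes because $B$ is symmetric. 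Hence $d\xi=0$.

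I expect the main obstacle to be precisely this middle step: identifying the transpose of the $\eta$-action with respect to the $\HH$-valued pairing and getting all the signs right. The mechanism is that $\eta(X)$ lies in the degree-two piece $TM\otimes E$ of the Clifford algebra, on which the reversion anti-automorphism acts as $-\mathrm{id}$, while \eqref{pty pairing2} says that Clifford multiplication by a vector of $TM\oplus E$ equals minus its transpose relative to $\langle\langle\cdot,\cdot\rangle\rangle$; the composition of these two sign changes is what cancels everything except the commutator $X\cdot\eta(Y)-\eta(Y)\cdot X=B(X,Y)$, so that the closedness of $\xi$ reduces exactly to the symmetry of the second fundamental form $B$.
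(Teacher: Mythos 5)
Your proof is correct, but it takes a genuinely different route from the paper at the decisive step. Both arguments start identically, differentiating the pairing to write $d\xi(e_1,e_2)$ as the same four terms. The paper then groups them into $-\langle\langle D\varphi^-,e_1\cdot e_2\cdot\varphi^+\rangle\rangle+\langle\langle e_1\cdot e_2\cdot\varphi^-,D\varphi^+\rangle\rangle$, substitutes the Dirac equation $D\varphi^{\pm}=\vec H\cdot\varphi^{\pm}$, and concludes from the fact that $\vec H\in E$ commutes with $e_1\cdot e_2$; this uses only the hypothesis $D\varphi=\vec H\cdot\varphi$ and nothing from Section 4. You instead invoke Lemma \ref{lem1} to replace $\nabla_{e_i}\varphi^{\pm}$ by $\eta(e_i)\cdot\varphi^{\pm}$, transpose $\eta$ across the pairing (your sign bookkeeping there is right: two applications of \eqref{pty pairing2} plus the anticommutation $e_j\cdot n_j=-n_j\cdot e_j$ do give $\langle\langle e_2\cdot\varphi^-,\eta(e_1)\cdot\varphi^+\rangle\rangle=-\langle\langle\eta(e_1)\cdot e_2\cdot\varphi^-,\varphi^+\rangle\rangle$), and reduce closedness of $\xi$ to the symmetry of $B$ via $X\cdot\eta(Y)-\eta(Y)\cdot X=B(X,Y)$. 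The hypotheses of Lemma \ref{lem1} are indeed met here ($\lambda=0$ and $|\varphi^{\pm}|=1$ make \eqref{condphi+-} trivial), so your argument is sound, and it has the conceptual merit of exhibiting $d\xi=0$ as literally the symmetry of the second fundamental form; the cost is that you lean on the heaviest result of the paper where the authors get by with the Dirac equation alone, so the paper's proof is the more economical and self-contained of the two.
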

\noindent
{\it Proof:} By a straightforward computation, we get
$$d\xi(e_1,e_2)=\langle\langle e_2\cdot\nabla_{e_1}\varphi^-,\varphi^+\rangle\rangle-\langle\langle e_1\cdot\nabla_{e_2}\varphi^-,\varphi^+\rangle\rangle+\langle\langle e_2\cdot\varphi^-,\nabla_{e_1}\varphi^+\rangle\rangle-\langle\langle e_1\cdot\varphi^-,\nabla_{e_2}\varphi^+\rangle\rangle.$$
First observe that 
\begin{eqnarray*}
\langle\langle e_2\cdot\nabla_{e_1}\varphi^-,\varphi^+\rangle\rangle-\langle\langle e_1\cdot\nabla_{e_2}\varphi^-,\varphi^+\rangle\rangle&=&-\langle\langle e_1\cdot\nabla_{e_1}\varphi^-,e_1\cdot e_2\cdot \varphi^+\rangle\rangle+\langle\langle e_2\cdot\nabla_{e_2}\varphi^-,e_2\cdot e_1\cdot\varphi^+\rangle\rangle\\
&=&-\langle\langle D\varphi^-,e_1\cdot e_2\cdot\varphi^+\rangle\rangle
\end{eqnarray*}
and similarly that 
\begin{eqnarray*}
\langle\langle e_2\cdot\varphi^-,\nabla_{e_1}\varphi^+\rangle\rangle-\langle\langle e_1\cdot\varphi^-,\nabla_{e_2}\varphi^+\rangle\rangle&=& \langle\langle e_1\cdot e_2\cdot\varphi^-,e_1\cdot\nabla_{e_1}\varphi^+\rangle\rangle-\langle\langle e_2\cdot e_1\cdot\varphi^-,e_2\cdot\nabla_{e_2}\varphi^+\rangle\rangle\\
&=&\langle\langle e_1\cdot e_2\cdot\varphi^-,D\varphi^+\rangle\rangle.
\end{eqnarray*}
Thus
$$d\xi(e_1,e_2)=\langle\langle e_1\cdot e_2\cdot D\varphi^-,\varphi^+\rangle\rangle+\langle\langle e_1\cdot e_2\cdot \varphi^-,D\varphi^+\rangle\rangle.$$
Since $D\varphi=\vec{H}\cdot\varphi$, then $D\varphi^+=\vec{H}\cdot\varphi^+$ and $D\varphi^-=\vec{H}\cdot\varphi^-$, which implies
$$d\xi(e_1,e_2)=\langle\langle(e_1\cdot e_2\cdot\vec{H}-\vec{H}\cdot e_1\cdot e_2)\cdot\varphi^-,\varphi^+\rangle\rangle=0.$$
$\finpreuve$\\
Assuming that $M$ is simply connected, there exists a function $F:M\lgra\HH$ so that $dF=\xi$. We now identify $\HH$ to $\R^4$ in the natural way.
\begin{thm}\label{theorem second integration}
\begin{enumerate}
\item The map $F=(F_1,F_2,F_3,F_4):M\lgra\R^4$ is an isometry.
\item The map 
\function{\Phi_E}{E}{M\times\R^4}{X\in E_m}{(F(m),\xi_1(X),\xi_2(X),\xi_3(X),\xi_4(X))}
is an isometry between $E$ and the normal bundle $N(F(M))$ of $F(M)$ into $\R^4$, preserving connections and second fundamental forms.
\end{enumerate}
\end{thm}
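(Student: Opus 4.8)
The plan is to show that $F$ is an isometric immersion and then identify $E$ with its normal bundle using the spinorial data. First I would compute $\langle dF(X), dF(Y)\rangle_{\R^4}$ for $X,Y\in TM$. Writing $dF(X)=\xi(X)=\langle\langle X\cdot\varphi^-,\varphi^+\rangle\rangle$ and using the identification $\HH\cong\R^4$, the Euclidean inner product of two quaternions $q,q'$ is $\Re e(\overline{q}\,q')$ (or $\frac12(\overline q q'+\overline{q'}q)$). Hence
\[
\langle \xi(X),\xi(Y)\rangle_{\R^4}=\pre\big(\overline{\langle\langle X\cdot\varphi^-,\varphi^+\rangle\rangle}\,\langle\langle Y\cdot\varphi^-,\varphi^+\rangle\rangle\big).
\]
Using (\ref{pty pairing1}) to flip the first factor and the definition of $\langle\langle\cdot,\cdot\rangle\rangle$ in terms of the hermitian product on $\Sigma^{\pm}$, together with $|\varphi^+|=|\varphi^-|=1$, this should collapse to $\Re e\langle X\cdot e_3\cdots\rangle$-type terms that reassemble into $g(X,Y)$; the key algebraic input is that Clifford multiplication by a unit tangent vector is a real isometry of $\Sigma$ and that $\varphi^+,\varphi^-$ are unit. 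This proves point 1, modulo checking $F$ is an immersion, which follows once the induced metric equals $g$.

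Next, for point 2, I would first check $\Phi_E$ is a bundle isometry: for $\zeta\in E_m$ one sets the last four components to $\xi_j(\zeta):=\langle\langle \zeta\cdot\varphi^-,\varphi^+\rangle\rangle$ (extending $\xi$ to $TM\oplus E$), and the same quaternionic computation as above, now with $X,Y$ replaced by normal vectors, gives $\langle \xi(\zeta),\xi(\zeta')\rangle_{\R^4}=\langle\zeta,\zeta'\rangle_E$. One then checks that $\xi(\zeta)$ is $\R^4$-orthogonal to $dF(TM)=\xi(TM)$: this is exactly the statement $\pre\big(\overline{\langle\langle X\cdot\varphi^-,\varphi^+\rangle\rangle}\,\langle\langle \zeta\cdot\varphi^-,\varphi^+\rangle\rangle\big)=0$ for $X\in TM$, $\zeta\in E$, which should follow from the orthogonality of the four subbundles $\Sigma^{\pm\pm}$ under the hermitian product together with the sign rule (\ref{pty pairing2}). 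So $\Phi_E$ maps $E$ fiberwise isometrically onto $N(F(M))$.

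Finally, I would check that $\Phi_E$ preserves connections and second fundamental forms. The cleanest route is to differentiate: for $X\in TM$ and a section $\zeta$ of $E$, compute $\nabla^{\R^4}_X\big(\xi(\zeta)\big)=d\big(\xi(\zeta)\big)(X)$ using the definition of $\xi$, the Leibniz rule, and the already-established formulas $\nabla_X\varphi^{\pm}$ from Lemma~\ref{lem1} (with $\lambda=0$): namely $\nabla_X\varphi^+=\frac{1}{|\varphi^+|^2}A^+(X)\cdot e_3\cdot\varphi^+$ and the analogue for $\varphi^-$, where by the Remark after Proposition~\ref{proptothm1} the operator $A^{\pm}$ encodes exactly $B$ via $\langle B(X,Y),\xi\rangle=\pre\langle X\cdot\nabla_Y\varphi,\xi\cdot\varphi\rangle$. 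Splitting $\nabla^{\R^4}_X(\xi(\zeta))$ into its components tangent to $F(M)$ (i.e. along $\xi(TM)$) and normal (along $\xi(E)$) should reproduce, on the tangential side, $-\langle S_\zeta X,\cdot\rangle$ with $S_\zeta$ the shape operator of $B$, and on the normal side the image under $\Phi_E$ of $\nabla^E_X\zeta$; the same computation applied to $X,Y\in TM$ recovers $B$ itself as the second fundamental form of $F$. The main obstacle is the last step: carefully tracking which Clifford terms land tangentially versus normally, and matching signs and factors so that the shape operator that emerges is precisely $S_\zeta$ determined by $g(S_\zeta X,Y)=\langle B(X,Y),\zeta\rangle$. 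Once the connection-preservation and second-fundamental-form identities are verified, uniqueness in the fundamental theorem of submanifolds (Tenenblat's theorem, quoted above) is not even needed — one has an explicit isometric immersion realizing $(g,E,B)$, and by construction it is $F$.
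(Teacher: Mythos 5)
Your proposal is correct and follows essentially the same route as the paper: the quaternionic identity $\overline{\xi(Y)}\xi(X)=\langle\langle X\cdot\varphi^-,Y\cdot\varphi^-\rangle\rangle$ (using $|\varphi^+|=1$) gives the isometry statements, and differentiating $\xi(Y)$ via Leibniz, then projecting onto $\xi(E)$ using the formula $\langle B(X,Y),\nu\rangle=\pre\langle X\cdot\nabla_Y\varphi,\nu\cdot\varphi\rangle$ and $\nabla_X\varphi=\eta(X)\cdot\varphi$, yields preservation of the second fundamental form and the normal connection. The only inessential difference is that you also discuss the tangential (shape-operator) component, which the paper does not need to compute.
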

\begin{proof}
Note first that the euclidean norm of $\xi\in \R^4\simeq\HH$ is 
$$|\xi|^2=\langle\xi,\xi\rangle=\overline{\xi}\xi\hspace{.5cm}\in \hspace{.5cm}\R,$$
and more generally that the real scalar product $\langle\xi,\xi'\rangle$ of  $\xi,\xi'\in \R^4\simeq\HH$ is the component of $\textit{1}$ in $\langle\langle\xi,\xi'\rangle\rangle=\overline{\xi'}\xi\in\HH.$ We first compute, for all $X,Y$ belonging to $E\cup TM,$
\begin{eqnarray*}
\overline{\xi(Y)}\xi(X)&=&\overline{\langle\langle Y\cdot\varphi^-,\varphi^+\rangle\rangle}\langle\langle X\cdot\varphi^-,\varphi^+\rangle\rangle=\overline{\left(\overline{[\varphi^+]}[Y\cdot\varphi^-]\right)}\left(\overline{[\varphi^+]}[X\cdot\varphi^-]\right)\\
&=&\overline{[Y\cdot\varphi^-]}[X\cdot\varphi^-]
\end{eqnarray*}
since $[\varphi^+]\overline{[\varphi^+]}=1$ ($|\varphi^+|=1$). Here and below the brackets $[.]$ stand for the components  ($\in \HH$) of the spinor fields in some local frame. Thus
\begin{equation}\label{formula xiX xiY}
\overline{\xi(Y)}\xi(X)=\langle\langle X\cdot\varphi^-,Y\cdot\varphi^-\rangle\rangle,
\end{equation}
which in particular implies (considering the components of $\textit{1}$ of these quaternions) 
\begin{equation}\label{xi isom1}
\langle\xi(X),\xi(Y)\rangle=\Re e\ \langle X\cdot\varphi^-,Y\cdot\varphi^-\rangle.
\end{equation}
This last identity easily gives
\begin{equation}\label{xi isom2}
\langle\xi(X),\xi(Y)\rangle=0\hspace{1cm}\mbox{and}\hspace{1cm}|\xi(Z)|^2=|Z|^2
\end{equation}
for all $X\in TM,$ $Y\in E$ and $Z\in E\cup TM.$ Thus $F=\int\xi$ is an isometry, and $\xi$ maps isometrically the bundle $E$ into the normal bundle of $F(M)$ in $\R^{4}.$ 

We now prove that $\xi$ preserves the normal connection and the second fundamental form: let $X\in TM$ and $Y\in\Gamma(E)\cup\Gamma(TM);$ then $\xi(Y)$ is a vector field normal or tangent to $F(M).$ Considering $\xi(Y)$ as a map $M\rightarrow\R^{4}\simeq \HH,$ we have
\begin{eqnarray}
d(\xi(Y))(X)&=&d\langle\langle Y\cdot\varphi^-,\varphi^+\rangle\rangle(X)\label{formula dxi}\\
&=&\langle\langle \nabla_XY\cdot\varphi^-,\varphi^+\rangle\rangle+\langle\langle Y\cdot\nabla_X\varphi^-,\varphi^+\rangle\rangle+\langle\langle Y\cdot\varphi^-,\nabla_X\varphi^+\rangle\rangle\nonumber
\end{eqnarray} 
where the connection $\nabla_XY$ denotes the connection on $E$ (if $Y\in\Gamma(E)$) or the Levi-Civita connection on $TM$ (if $Y\in\Gamma(TM)$). We will need the following formulas:
\begin{lem}
We have
\begin{eqnarray*}
\langle\ \langle\langle\nabla_XY\cdot\varphi^-,\varphi^+\rangle\rangle\ ,\ \xi(\nu)\ \rangle&=&\Re e\ \langle\nabla_XY\cdot\varphi^-,\nu\cdot\varphi^-\rangle\\&=&\Re e\ \langle\nabla_XY\cdot\varphi^+,\nu\cdot\varphi^+\rangle,
\end{eqnarray*}
$$\langle\ \langle\langle Y\cdot\nabla_X\varphi^-,\varphi^+\rangle\rangle\ ,\ \xi(\nu)\ \rangle=\Re e\ \langle Y\cdot\nabla_X\varphi^-,\nu\cdot\varphi^-\rangle$$
and
$$\langle\ \langle\langle Y\cdot\varphi^-,\nabla_X\varphi^+\rangle\rangle\ ,\ \xi(\nu)\ \rangle=\Re e\ \langle Y\cdot\nabla_X\varphi^+,\nu\cdot\varphi^+\rangle.$$
In the expressions above, $\langle.,.\rangle$ defined on $\HH$ for the left-hand side and $\Re e\ \langle.,.\rangle$ defined on $\Sigma$ for the right-hand side of each identity,  stand for the natural real scalar products.
\end{lem}
\begin{proof}
The first identity is a consequence of (\ref{xi isom1}) and the second identity may be obtained by a very similar computation observing that,   
 by (\ref{pty pairing1})-(\ref{pty pairing2}),
 $$\langle\ \langle\langle\nabla_XY\cdot\varphi^-,\varphi^+\rangle\rangle\ ,\ \xi(\nu)\ \rangle=\langle\ \langle\langle\nabla_XY\cdot\varphi^+,\varphi^-\rangle\rangle\ ,\ \langle\langle\nu\cdot\varphi^+,\varphi^-\rangle\rangle\ \rangle.$$ 
The last two identities may be obtained by very similar computations.
\end{proof}
\noindent From (\ref{formula dxi}) and the lemma, we readily get the formula 
\begin{equation}\label{formula dxi 2}
\langle d(\xi(Y))(X),\xi(\nu)\rangle=\frac{1}{2}\Re e\ \langle\nabla_XY\cdot\varphi,\nu\cdot\varphi\rangle+\Re e\ \langle Y\cdot\nabla_X\varphi,\nu\cdot\varphi\rangle.
\end{equation}
We first suppose that $X,Y\in\Gamma(TM).$ The first term in the right-hand side of the equation above vanishes in that case since $\nabla_XY\in\Gamma(TM),$ $\nu\in\Gamma(E)$. Recalling (\ref{B phi c=0}), we get then
\begin{eqnarray*}
\langle d(\xi(Y))(X),\xi(\nu)\rangle&=&\Re e\ \langle Y\cdot\nabla_X\varphi,\nu\cdot\varphi\rangle=\langle B(X,Y),\nu\rangle=\langle \xi(B(X,Y)),\xi(\nu)\rangle.
\end{eqnarray*}
Hence the component of $d(\xi(Y))(X)$ normal to $F(M)$ is given by
\begin{equation}\label{xi preserves B}
(d(\xi(Y))(X))^N=\xi(B(X,Y)).
\end{equation}
We now suppose that $X\in\Gamma(TM)$ and $Y\in\Gamma(E).$ We first observe that the second term in the right-hand side of equation (\ref{formula dxi 2}) vanishes. Indeed, if $(e_3,e_4)$ stands for an orthonormal basis of $E,$ for all  $i,j\in\{3,4\}$ we have
\begin{eqnarray*}
\Re e\ \langle e_i\cdot\nabla_X\varphi,e_j\cdot\varphi\rangle=-\Re e\ \langle \nabla_X\varphi,e_i\cdot e_j\cdot\varphi\rangle=-\Re e\ \langle \eta(X)\cdot\varphi,e_i\cdot e_j\cdot\varphi\rangle,
\end{eqnarray*}
which is a sum of terms of the form $\Re e\ \langle e\cdot\varphi,e'\cdot\varphi\rangle$ with $e$ and $e'$ belonging to $TM$ and $E$ respectively; these terms are therefore  all equal to zero. Thus, (\ref{formula dxi 2}) simplifies to
$$\langle d(\xi(Y))(X),\xi(\nu)\rangle=\frac{1}{2}\Re e\ \langle\nabla_XY\cdot\varphi,\nu\cdot\varphi\rangle=\langle\xi(\nabla_XY),\xi(\nu)\rangle.$$
Hence
\begin{equation}\label{xi preserves nabla}
(d(\xi(Y))(X))^N=\xi(\nabla_XY).
\end{equation}
Equations (\ref{xi preserves B}) and (\ref{xi preserves nabla}) mean that $\Phi_E=\xi$ preserves the second fundamental form and the normal connection respectively.
\end{proof}
\begin{rem}
The immersion $F:M\rightarrow\R^4$ given by the fundamental theorem is thus
$$F=\int\xi=\left(\int\ \xi_1,\int\ \xi_2,\int\ \xi_3,\int\ \xi_4\right).$$
This formula generalizes the classical Weierstrass representation: let $\alpha_1,\alpha_2,\alpha_3,\alpha_4$ be the $\C-$linear forms defined by
$$\alpha_k(X)=\xi_k(X)-i\xi_k(JX),$$
for $k=1,2,3,4,$ where $J$ is the natural complex structure of $M.$ Let $z$ be a conformal parameter of $M,$ and let $\psi_1,\psi_2,\psi_3,\psi_4:M\rightarrow\C$ be such that
$$\alpha_1=\psi_1dz,\ \alpha_2=\psi_2dz,\ \alpha_3=\psi_3 dz,\ \alpha_4=\psi_4dz.$$
By an easy computation using $D\varphi=\vec H\cdot\varphi,$ we see that $\alpha_1, \alpha_2, \alpha_3$ and $\alpha_4$ are holomorphic forms if and only if $M$ is a minimal surface ($\vec H=\vec 0$). Then
if $M$ is minimal, 
\begin{eqnarray*}
F&=&Re\left(\int\alpha_1,\int\alpha_2,\int\alpha_3,\int\alpha_4\right)\\
&=&Re\left(\int\psi_1dz,\int\psi_2dz,\int\psi_3dz,\int\psi_4dz\right)
\end{eqnarray*}
where $\psi_1,\psi_2,\psi_3,\psi_4$ are holomorphic functions. This is the Weierstrass representation of minimal surfaces.
\end{rem}
\begin{rem}
Theorem \ref{theorem second integration} also gives a spinorial proof of the fundamental theorem. We may integrate the Gauss, Ricci and Codazzi equations in two steps:
\\
\\1- first solving
\begin{equation}\label{nabla phi eta rem}
\nabla_X\varphi=\eta(X)\cdot\varphi,
\end{equation}
where 
$$\eta(X)=-\frac{1}{2}\sum_{j=1,2}e_j\cdot B(e_j,X)$$
(there is a unique solution in $\Gamma(\Sigma),$ up to the natural right-action of $Spin(4)$);
\\
\\2- then solving
$$dF=\xi$$
where $\xi(X)=\langle\langle X\cdot\varphi^-,\varphi^+\rangle\rangle$ (the solution is unique, up to translations). 
\\
\\Indeed, equation (\ref{nabla phi eta rem}) is solvable, since its conditions of integrability are exactly the Gauss, Ricci and Codazzi equations; see the proof of Theorem \ref{thm1}. Moreover, the multiplication of $\varphi$ on the right by a constant belonging to $Spin(4)$ in the first step, and the addition to $F$ of a constant belonging to $\R^4$ in the second step, correspond to a rigid motion in $\R^4.$ 
\end{rem}
\section{Surfaces in $\R^3$ and $S^3$.}
The aim of this section is to obtain as particular cases the spinor characterizations of T. Friedrich \cite{Fr} and B. Morel \cite{Mo} of surfaces in $\R^3$ and $S^3.$ Assume that  $M^2\subset \mathcal{H}^3\subset\R^4,$ where $\mathcal{H}^3$ is a hyperplane, or a sphere of $\R^4.$ Let $N$ be a unit vector field such that
 $$T\mathcal{H}=TM\oplus_{_\perp}\R N.$$ 
The intrinsic spinors of $M$ identify with the spinors of $\mathcal{H}$ restricted to $M,$ which in turn identify with the positive spinors of $\R^4$ restricted to $M:$ 
\begin{prop}
There is an identification
\begin{eqnarray*}
\Sigma M &\stackrel{\sim}{\rightarrow}& \Sigma^+_{|M}\\
\psi&\mapsto&\psi^*
\end{eqnarray*}
such that
$$(\nabla\psi)^*=\nabla(\psi^*)$$
and such that the Clifford actions are linked by
$$(X\cdot_{_M}\psi)^*=N\cdot X\cdot \psi^*$$
for all $X\in TM$ and all $\psi\in \Sigma M.$ 
\end{prop}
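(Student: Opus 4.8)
The plan is to exhibit the isomorphism $\Sigma M \xrightarrow{\sim} \Sigma^+_{|M}$ explicitly at the representation-theoretic level, and then check that it is parallel and intertwines the Clifford actions as stated. First I would recall the identification of $\Sigma\mathcal{H}^3_{|M}$ with $\Sigma M$: writing $T\mathcal{H} = TM \oplus_\perp \mathbb{R}N$, the standard (Bär-type) identification sends $\psi \in \Sigma M$ to the restriction to $M$ of a spinor of $\mathcal{H}^3$, under which $X\cdot_M\psi$ corresponds to $N\cdot_{\mathcal{H}}X\cdot_{\mathcal{H}}\psi$ for $X\in TM$ (this is the codimension-one version of the twisted-bundle identification already used in Section 2.3). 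So the real content is the second step: identify $\Sigma\mathcal{H}^3_{|M}$ with $\Sigma^+_{|M} = (\Sigma^+M\otimes\Sigma^+E)\oplus(\Sigma^-M\otimes\Sigma^-E)$, where now $E$ is the rank-$2$ normal bundle of $M$ in $\R^4$.

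The key observation is that when $M\subset\mathcal{H}^3\subset\R^4$, the normal bundle $E$ splits (at least locally, and globally since $\mathcal{H}^3$ is a hyperplane or round sphere) as $E = \mathbb{R}N \oplus \mathbb{R}N'$, where $N$ is normal to $M$ inside $\mathcal{H}^3$ and $N'$ is the unit normal of $\mathcal{H}^3$ in $\R^4$. Then $\Sigma E$ is a trivial rank-$2$ bundle and, after choosing the orientation $(e_3,e_4)=(N,N')$, the Clifford action of $\omega_E := e_3\cdot e_4$ on $\Sigma E$ has eigenspaces $\Sigma^\pm E$ each of complex rank one; picking a unit spinor $\sigma_0\in\Sigma^+E$ one gets $\Sigma^-E = \mathbb{R}\{e_3\cdot\sigma_0\}$ (up to the relevant scalars). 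I would use this to build the map: send $\alpha\in\Sigma M = \Sigma^+M\oplus\Sigma^-M$ to the element of $(\Sigma^+M\otimes\Sigma^+E)\oplus(\Sigma^-M\otimes\Sigma^-E) = \Sigma^+_{|M}$ obtained by tensoring $\alpha^+$ with $\sigma_0$ and $\alpha^-$ with $e_3\cdot_E\sigma_0$ (suitably normalized), i.e. $\psi^* := \alpha^+\otimes\sigma_0 + \alpha^-\otimes(e_3\cdot_E\sigma_0)$. This is manifestly a $\C$-linear (or quaternionic-linear, matching Lemma on $\mathcal{C}l_2\hat\otimes\mathcal{C}l_2\simeq\mathcal{C}l_4$) isomorphism of bundles.

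Next I would verify the Clifford formula $(X\cdot_M\psi)^* = N\cdot X\cdot\psi^*$ for $X\in TM$. Using the definition of the twisted Clifford product from Section 2.2 — $X\cdot(\alpha\otimes\sigma) = (X\cdot_M\alpha)\otimes\overline{\sigma}$ for tangent $X$, and $e_3\cdot(\alpha\otimes\sigma) = \alpha\otimes(e_3\cdot_E\sigma)$ — one computes $N\cdot X\cdot\psi^*$ directly: $X\cdot\psi^* = (X\cdot_M\alpha^+)\otimes\overline{\sigma_0} + (X\cdot_M\alpha^-)\otimes\overline{e_3\cdot_E\sigma_0}$, and since $X\cdot_M$ swaps $\Sigma^\pm M$ and $\overline{\sigma_0} = \sigma_0$, $\overline{e_3\cdot_E\sigma_0} = -e_3\cdot_E\sigma_0$, applying $N\cdot = e_3\cdot$ afterwards produces exactly $(X\cdot_M\alpha^+)\otimes(e_3\cdot_E\sigma_0) + (X\cdot_M\alpha^-)\otimes\sigma_0$, which by the recipe above equals $(X\cdot_M\psi)^*$ once one tracks that $X\cdot_M\alpha^+\in\Sigma^-M$ pairs with $e_3\cdot_E\sigma_0\in\Sigma^-E$ and vice versa. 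The parallelism $(\nabla\psi)^* = \nabla\psi^*$ then follows because $\sigma_0$ and $e_3\cdot_E\sigma_0$ are $\nabla^{\Sigma E}$-parallel sections: indeed $N$ and $N'$ are parallel in the normal bundle of $M$ in $\R^4$ (for $N'$ this is because $\mathcal{H}^3$ is totally geodesic or totally umbilic with parallel mean curvature in the sphere case, so $N'$ is parallel in the ambient normal connection; for $N$ one uses that the second fundamental form of $M$ in $\R^4$ restricted to $\mathbb{R}N'$ has no effect on $N$), hence the induced connection on $\Sigma E$ is flat with $\sigma_0$ parallel, and the twisted connection reduces to $\nabla^{\Sigma M}$ on the $\Sigma M$ factor.

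I expect the main obstacle to be the careful bookkeeping of signs and of which half-spinor spaces get matched under the splitting $E = \mathbb{R}N\oplus\mathbb{R}N'$ — in particular making sure the orientation conventions for $\omega_4 = -e_1e_2e_3e_4$ and for $\Sigma^\pm E$ are compatible so that the map genuinely lands in $\Sigma^+_{|M}$ and not $\Sigma^-_{|M}$, and handling the sphere case (where $N'$ is not parallel in $\R^4$ but the Killing-spinor structure must be invoked) alongside the hyperplane case uniformly. The quaternionic model from the proof of Lemma on elements of order $2$ should make the isomorphism statement itself routine; the real work is the compatibility with $\cdot$ and $\nabla$.
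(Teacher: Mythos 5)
Your construction is correct, and it supplies a proof that the paper itself omits (the proposition is stated there as a known identification, with no argument given), so there is no ``paper route'' to compare against; your direct map $\psi=\alpha^{+}+\alpha^{-}\mapsto \alpha^{+}\otimes\sigma_{0}+\alpha^{-}\otimes(e_{3}\cdot_{E}\sigma_{0})$ is the natural one. The Clifford verification goes through exactly as you wrote it: with the paper's twisted product, $X\cdot\psi^{*}=(X\cdot_{M}\alpha^{+})\otimes\sigma_{0}-(X\cdot_{M}\alpha^{-})\otimes(e_{3}\cdot_{E}\sigma_{0})$ and applying $e_{3}\cdot$ (using $e_{3}\cdot_{E}e_{3}\cdot_{E}=-1$) gives $(X\cdot_{M}\alpha^{+})\otimes(e_{3}\cdot_{E}\sigma_{0})+(X\cdot_{M}\alpha^{-})\otimes\sigma_{0}=(X\cdot_{M}\psi)^{*}$, and the image lies in $\Sigma^{+}=\Sigma^{++}\oplus\Sigma^{--}$ as required. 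Two small clean-ups. First, your opening reduction through $\Sigma\mathcal{H}^{3}_{|M}$ is redundant: the second-paragraph construction is self-contained and is all you need. Second, your worry about the sphere case is unfounded and needs no appeal to Killing spinors: for $\mathcal{H}^{3}=S^{3}$ the outer normal is the position vector $\nu$, and $\overline{\nabla}_{X}\nu=X$ is tangent to $M$, so $\nu$ is parallel for the normal connection of $M$ in $\R^{4}$; together with $\langle\nabla^{E}N,N\rangle=0$ and $\langle\nabla^{E}N,\nu\rangle=-\langle N,\nabla^{E}\nu\rangle=0$ this makes the normal connection flat in both cases, hence $\sigma_{0}$ can be chosen $\nabla^{\Sigma E}$-parallel (at least locally, which is all the proposition requires; globally one uses that $E$ is trivialized by $(N,\nu)$ with the trivial spin structure). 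With $\sigma_{0}$ parallel, $\nabla(\psi^{*})=(\nabla^{\Sigma M}\alpha^{+})\otimes\sigma_{0}+(\nabla^{\Sigma M}\alpha^{-})\otimes(e_{3}\cdot_{E}\sigma_{0})=(\nabla\psi)^{*}$ since $\nabla^{\Sigma M}$ preserves $\Sigma^{\pm}M$, completing the argument.
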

Using this identification, the intrinsic Dirac operator on $M$ defined by
$$D_M\psi:=e_1\cdot_{_M}\nabla_{e_1}\psi+e_2\cdot_{_M}\nabla_{e_2}\psi$$
is linked to $D$ by
$$(D_M\psi)^*=N\cdot D\psi^*.$$
If $\varphi\in\Gamma(\Sigma)$ is a solution of
$$D\varphi=\vec H\cdot \varphi\hspace{1cm}\mbox{ and }\hspace{1cm}|\varphi^+|=|\varphi^-|=1$$
then $\varphi^+\in\Sigma^+$ may be considered as belonging to $\Sigma M;$ it satisfies 
\begin{equation}\label{eqnphi+}
D_M\varphi^+ =N\cdot D\varphi^+=N\cdot\vec H\cdot\varphi^+.
\end{equation}
We examine separately the case of a surface in a hyperplane, and in a 3-dimensional sphere:
\\
\\1. If $\mathcal{H}$ is a hyperplane, then $\vec H$ is of the form $HN,$ and (\ref{eqnphi+}) reads
\begin{equation}\label{eqn Friedrich}
D_M\varphi^+ =-H\varphi^+.
\end{equation}
This is the equation considered by T. Friedrich in \cite{Fr}.
\\
\\2. If $\mathcal{H}=S^3,$ then $\vec H$ is of the form $HN-\nu,$ where $\nu$ is the outer unit normal of $S^3,$ and  (\ref{eqnphi+}) reads
\begin{equation}\label{eqn Morel}
D_M\varphi^+ =-H\varphi^+-i\overline{\varphi^+}.
\end{equation}
This equation is obtained by B. Morel in \cite{Mo}.
\\

Conversely, we now suppose that $\psi$ is an intrinsic spinor field on $M$ solution of (\ref{eqn Friedrich}) or (\ref{eqn Morel}). The aim is to construct a spinor field $\varphi$ in dimension 4 which induces an immersion in a hyperplane, or in a 3-sphere. Define $E=M\times\R^2,$ with its natural metric $\langle.,.\rangle$ and its trivial connection $\nabla'$, and consider $\nu,N\in \Gamma(E)$ such that
$$|\nu|=|N|=1,\hspace{1cm} \langle\nu,N\rangle=0\hspace{1cm}\mbox{and}\hspace{1cm}\nabla'\nu=\nabla'N=0.$$
We first consider the case of an hyperplane:
\begin{prop}
Let $\psi\in\Gamma(\Sigma M)$ be a solution of
$$D_M\psi =-H\psi$$
of constant length $|\psi|=1.$ There exists $\varphi\in\Gamma(\Sigma)$ solution of 
 \begin{equation}
 D\varphi=\vec H\cdot\varphi\hspace{1cm}\mbox{and}\hspace{1cm}|\varphi^+|=|\varphi^-|=1,
 \end{equation}
with $\vec H=HN,$ such that 
$$\varphi^+=\psi$$
and the normal vector field
$$\xi(\nu)=\langle\langle\nu\cdot\varphi^-,\varphi^+\rangle\rangle$$
has a fixed direction in $\HH.$ In particular, the immersion given by $\varphi$ belongs to the hyperplane $\xi(\nu)^{\perp}$ of $\HH.$ The spinor field $\varphi$ is  unique, up to the natural right-action of $S^3$ on $\varphi^-.$
\end{prop}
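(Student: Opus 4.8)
The plan is to construct $\varphi$ explicitly out of $\psi$ and then invoke Theorems~\ref{thm1} and~\ref{theorem second integration}. First I would use the identification $\Sigma M\simeq\Sigma^+_{|M}$ of the preceding Proposition to set $\varphi^+:=\psi^*\in\Gamma(\Sigma^+_{|M})$; then $|\varphi^+|=|\psi|=1$, and since $(D_M\psi)^*=N\cdot D\psi^*$ and $D_M\psi=-H\psi$, one gets $N\cdot D\varphi^+=-H\varphi^+$, hence, multiplying on the left by $N$ and using $N\cdot N=-1$,
$$D\varphi^+=HN\cdot\varphi^+=\vec{H}\cdot\varphi^+ .$$
The second ingredient is the choice of the negative part: I would put $\varphi^-:=\nu\cdot\varphi^+\in\Gamma(\Sigma^-)$ and $\varphi:=\varphi^++\varphi^-$.

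The key point to check is that $\varphi^-$ also solves $D\varphi^-=\vec{H}\cdot\varphi^-$, so that $D\varphi=\vec{H}\cdot\varphi$. As $E=M\times\R^2$ is flat and $\nabla'\nu=0$, Clifford multiplication by $\nu$ is parallel, so $\nabla_{e_j}\varphi^-=\nu\cdot\nabla_{e_j}\varphi^+$; since $e_j\in TM$ and $\nu\in E$ anticommute in $\mathcal{C}l(TM\oplus E)$, this yields
$$D\varphi^-=\sum_{j=1,2}e_j\cdot\nu\cdot\nabla_{e_j}\varphi^+=-\nu\cdot D\varphi^+=-H\,\nu\cdot N\cdot\varphi^+=HN\cdot\nu\cdot\varphi^+=\vec{H}\cdot\varphi^- ,$$
the last equality coming from $\nu\cdot N=-N\cdot\nu$. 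Since $|\varphi^-|=|\nu\cdot\varphi^+|=1$, the spinor $\varphi$ satisfies the Dirac equation~\eqref{dirac equation cor} together with the (empty, because $\lambda=0$) norm conditions~\eqref{condphi+-}, and $\varphi^\pm$ are nowhere zero; so Theorem~\ref{thm1} provides a local isometric immersion of $(M,g)$ into $\R^4$ with normal bundle $E$ and mean curvature $\vec{H}=HN$, which by Theorem~\ref{theorem second integration} is $F=\int\xi$ with $\xi(X)=\langle\langle X\cdot\varphi^-,\varphi^+\rangle\rangle$.

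Then I would identify $\xi(\nu)$. Because $\nu\cdot\varphi^-=\nu\cdot\nu\cdot\varphi^+=-\varphi^+$, evaluating the $\HH$-valued pairing in a frame for $\Sigma^+$ gives
$$\xi(\nu)=\langle\langle\nu\cdot\varphi^-,\varphi^+\rangle\rangle=\overline{[\varphi^+]}\,[\nu\cdot\varphi^-]=-\,\overline{[\varphi^+]}\,[\varphi^+]=-|\varphi^+|^2=-1\ \in\ \HH ,$$
a constant quaternion, hence of fixed direction. Since $\xi(\nu)$ is constant and $\langle\xi(X),\xi(\nu)\rangle=0$ for every $X\in TM$ by~\eqref{xi isom2}, the function $\langle F,\xi(\nu)\rangle$ is constant on $M$; thus $F(M)$ lies in an affine hyperplane of $\HH$ orthogonal to $\xi(\nu)=-1$, i.e.\ in $\xi(\nu)^\perp$.

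Finally, for uniqueness, any admissible $\varphi'$ has $\varphi'^+=\psi=\varphi^+$, so $\varphi'^-$ is a unit-length section of the quaternionic line bundle $\Sigma^-$, which $\varphi^-$ trivializes; hence $\varphi'^-=\varphi^-\cdot c$ for some $c\colon M\to S^3$, and, exactly as above, $\xi'(\nu)=\langle\langle\nu\cdot\varphi^-,\varphi^+\rangle\rangle\,c=-c$. Requiring $\xi'(\nu)$ to have a fixed direction (with $|c|\equiv1$) then forces $c$ to be constant, which is the asserted uniqueness. I expect no serious obstacle: the only delicate point is the sign bookkeeping in $\mathcal{C}l(TM\oplus E)$ — it is precisely the parallelism and the anticommutation of the fixed normal fields $N,\nu$ that single out the ansatz $\varphi^-=\nu\cdot\varphi^+$ (rather than $\varphi^-=N\cdot\varphi^+$, which fails the Dirac equation by a sign) — together with keeping track of the right $\HH$-action when trivializing $\Sigma^-$.
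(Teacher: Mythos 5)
Your proposal is correct and follows essentially the same construction as the paper: set $\varphi^+=\psi$ and $\varphi^-=\pm\nu\cdot\psi$, verify the Dirac equation via the anticommutation of $\nu$ with $TM$, and observe that $\xi(\nu)$ is a constant unit quaternion (the paper takes $\varphi^-=-\nu\cdot\psi$, getting $\xi(\nu)=1$ and checking directly that each $\xi(X)$ is purely imaginary, which is equivalent to your orthogonality argument). You also spell out the uniqueness step that the paper calls ``straightforward,'' and your account is consistent with it.
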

\noindent \textit{Proof:} define $\varphi=(\varphi^+,\varphi^-)$ by
$$\varphi^+=\psi,\hspace{1cm} \varphi^-=-\nu\cdot\psi.$$
We compute:
$$D\varphi^-=\nu\cdot D\varphi^+=\nu\cdot\vec H\cdot\varphi^+=\vec H\cdot\varphi^-,$$
$$\xi(\nu)=\langle\langle\nu\cdot\varphi^-,\varphi^+\rangle\rangle=1,$$
and, for all $X\in TM,$
\begin{equation*}
\xi(X)=\langle\langle X\cdot\varphi^-,\varphi^+\rangle\rangle=-\langle\langle X\cdot\nu\cdot\psi,\psi\rangle\rangle=\langle\langle \psi,X\cdot\nu\cdot\psi\rangle\rangle=\overline{\langle\langle X\cdot\nu\cdot\psi,\psi\rangle\rangle}=-\overline{\xi(X)},
\end{equation*}
that is $\xi(X)\in \Im\thmrm{m} (\HH),$ the hyperplane of pure imaginary quaternions. Thus $F=\int\xi$ also belongs to the hyperplane $\Im\thmrm{m} (\HH).$ Uniqueness is straightforward.
$\finpreuve$

We now consider the case of the 3-sphere:
\begin{prop}
Let $\psi\in\Gamma(\Sigma M)$ be a solution of
\begin{eqnarray*}
D_M\psi &=&-H\psi-i\overline{\psi}
\end{eqnarray*}
of constant length $|\psi|=1.$ There exists $\varphi\in\Gamma(\Sigma)$ solution of 
\begin{equation}
D\varphi=\vec H\cdot\varphi\hspace{1cm}\mbox{and}\hspace{1cm}|\varphi^+|=|\varphi^-|=1,
\end{equation}
with $\vec H=HN-\nu,$ such that 
$$\varphi^+=\psi$$
and the immersion $F$ defined by $\varphi$ is given by the unit normal vector field $\xi(\nu):$
$$F=\xi(\nu)=\langle\langle\nu\cdot\varphi^-,\varphi^+\rangle\rangle.$$
In particular $F(M)$ belongs to the sphere $S^3\subset\HH.$ The spinor field $\varphi$ is  unique, up to the natural right-action of $S^3$ on $\varphi^-.$
\end{prop}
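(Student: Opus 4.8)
The plan is to follow the proof of the preceding (hyperplane) proposition almost verbatim, the only new feature being that the extra term $-i\overline{\psi}$ in the intrinsic Dirac equation encodes the extrinsic curvature of $S^{3}$ inside $\R^{4}$, and will force the normal field $\xi(\nu)$ to be the position vector of the immersion rather than a constant.

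First I would build $\varphi$. Set $\varphi^{+}=\psi$ via the identification $\Sigma M\stackrel{\sim}{\to}\Sigma^{+}_{|M}$ established above, and define $\varphi^{-}$ by an explicit Clifford‑algebraic expression in $\psi$ and the parallel orthonormal frame $(\nu,N)$ of $E$: the natural candidate is the analogue of $\varphi^{-}=-\nu\cdot\psi$ from the hyperplane case, adjusted (by the volume element $\nu\cdot N$ of $E$ and/or by quaternionic conjugation) so as to produce the summand $-\nu\cdot\varphi$ of $\vec H\cdot\varphi$ out of $-i\overline{\psi}$. Since $\nu$ and $N$ are $\nabla'$‑parallel and anticommute with tangent vectors, $D\varphi^{-}$ equals $\pm\,\nu\cdot D\varphi^{+}$ up to correction terms; substituting the relation $(D_{M}\psi)^{*}=N\cdot D\psi^{*}$ together with the hypothesis $D_{M}\psi=-H\psi-i\overline{\psi}$, a short computation of the same type as in the proof of Lemma~\ref{lem1} gives $D\varphi=\vec H\cdot\varphi$ with $\vec H=HN-\nu$; concretely the term $-i\overline{\psi}$ contributes exactly the $-\nu\cdot\varphi$ part through the identity $N\cdot\nu\cdot\varphi^{+}=i\,\overline{\varphi^{+}}$ already used to derive (\ref{eqn Morel}). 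The norm conditions $|\varphi^{+}|=|\varphi^{-}|=1$ are immediate from $|\psi|=1$, Clifford multiplication by unit vectors and quaternionic conjugation being isometries.

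Next I would identify $F$ with $\xi(\nu)$. By Proposition~\ref{proptothm1} and Theorem~\ref{theorem second integration}, $\varphi$ induces an isometric immersion $F=\int\xi\colon M\lgra\R^{4}\simeq\HH$ with normal bundle $\xi(E)$, and $\xi(\nu)$ is a \emph{unit} normal field along $F(M)$. The crux is the identity $d\big(\xi(\nu)\big)=\xi$. Using $\nabla_{X}\nu=0$ and formula (\ref{formula dxi}),
$$d\big(\xi(\nu)\big)(X)=\langle\langle\,\nu\cdot\nabla_{X}\varphi^{-},\varphi^{+}\,\rangle\rangle+\langle\langle\,\nu\cdot\varphi^{-},\nabla_{X}\varphi^{+}\,\rangle\rangle,$$
and I would evaluate this with the first–order equation $\nabla_{X}\varphi=\eta(X)\cdot\varphi$ of Lemma~\ref{lem1} (with $\lambda=0$), $\eta(X)=-\tfrac12\sum_{j}e_{j}\cdot B(e_{j},X)$, $B$ being the second fundamental form recovered from $\varphi$ via (\ref{B phi c=0}). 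Differentiating the defining relation of $\varphi^{-}$ and comparing with $\nabla_{X}\varphi^{-}=\eta(X)\cdot\varphi^{-}$ yields an equation of the form $(S_{\nu}+\mathrm{Id})X\cdot\varphi^{+}=0$ for all $X\in TM$, hence $S_{\nu}=-\mathrm{Id}$ since $\varphi^{+}$ is nowhere zero (this is the analogue of the identity $S_{\nu}=0$ obtained in the hyperplane case, now reflecting the umbilicity of $S^{3}$ in $\R^{4}$). Inserting this back gives $d(\xi(\nu))(X)=\xi(X)$, so $\xi(\nu)-F$ is a constant quaternion; choosing the integration constant of $F$ so that $F=\xi(\nu)$, one gets $|F|=|\xi(\nu)|=|\nu|=1$, i.e. $F(M)\subset S^{3}\subset\HH$. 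Uniqueness of $\varphi$ up to the right‑action of $S^{3}$ on $\varphi^{-}$ follows exactly as in the hyperplane case.

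I expect the main obstacle to be pinning down the correct algebraic expression for $\varphi^{-}$ — in particular the precise way quaternionic conjugation enters, forced by the $-i\overline{\psi}$ term — so that simultaneously $D\varphi=\vec H\cdot\varphi$ holds with $\vec H=HN-\nu$ and the $\varphi^{+}\!-\!\varphi^{-}$ relation produces $S_{\nu}=-\mathrm{Id}$; once the ansatz is fixed, everything reduces to the Clifford bookkeeping already carried out in the proofs of Lemma~\ref{lem1} and Theorem~\ref{theorem second integration}.
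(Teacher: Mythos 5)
There is a genuine gap at the heart of your construction: no pointwise Clifford--algebraic formula $\varphi^-=(\text{algebraic expression in }\psi,\nu,N)$ can work here, so the ``main obstacle'' you defer to the end is in fact insurmountable in the form you propose. The reason is structural. The conclusion you want is $F=\xi(\nu)$ together with $dF=\xi$; since $F$ is an immersion, $\xi(\nu)$ must be a \emph{non-constant} $\HH$-valued function. But any ansatz of the form $\varphi^-=-\nu\cdot\psi\cdot q_0$ (or more generally $w\cdot\psi\cdot q_0$ with $w$ a $\nabla'$-parallel unit section of $E$ and $q_0$ a constant quaternion) gives $\xi(\nu)=\langle\langle\nu\cdot\varphi^-,\varphi^+\rangle\rangle=\overline{[\psi]}[\psi]\,q_0=q_0$, a constant — exactly as in the hyperplane case, and incompatible with $F=\xi(\nu)$. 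Indeed, the pair of conditions $F=\langle\langle\nu\cdot\varphi^-,\varphi^+\rangle\rangle$ and $dF(X)=\langle\langle X\cdot\varphi^-,\varphi^+\rangle\rangle$ is \emph{equivalent} to $\varphi^-=-\nu\cdot\varphi^+\cdot F$, i.e.\ $\varphi^-$ necessarily involves the (a priori unknown, non-constant) map $F$ itself, hence a primitive, and cannot be local in $\psi$.

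The paper's proof therefore proceeds differently: it sets $\varphi^-=-\nu\cdot\varphi^+\cdot F$ where $F:M\to\HH$ is obtained by solving the linear system
$$dF(X)=\beta(X)F,\qquad \beta(X)=-\langle\langle X\cdot\nu\cdot\varphi^+,\varphi^+\rangle\rangle,$$
and verifies the zero-curvature condition $d\beta(X,Y)=\beta(X)\beta(Y)-\beta(Y)\beta(X)$ — this is where the hypothesis $D_M\psi=-H\psi-i\overline{\psi}$ is actually consumed. The $F$-dependence is also what produces the $-\nu$ summand of $\vec H$: since right multiplication by quaternions commutes with the Clifford action, $D\varphi^-=\nu\cdot(D\varphi^+)\cdot F-\sum_j\nu\cdot e_j\cdot\varphi^+\cdot dF(e_j)$, and the second term, rewritten via $dF=\beta F$, is exactly $-\nu\cdot\varphi^-$; with a constant $q_0$ in place of $F$ this term is absent and you only recover $\vec H=HN$. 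Your second step (deducing $S_\nu=-\mathrm{Id}$ and $d(\xi(\nu))=\xi$) is sound as a consistency check once $\varphi$ exists, but it cannot substitute for the existence argument, which requires integrating the system above rather than writing down a formula.
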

\noindent \textit{Proof:} The system
$$\left\{\begin{array}{l}
F=\langle\langle\nu\cdot\varphi^-,\varphi^+\rangle\rangle\\
dF(X)=\langle\langle X\cdot\varphi^-,\varphi^+\rangle\rangle
\end{array}\right.$$
is equivalent to
$$\varphi^-=-\nu\cdot\varphi^+\cdot ,F$$
where $F:M\rightarrow\HH$ solves the equation
\begin{equation}\label{eqnF}
dF(X)=\beta(X)F
\end{equation}
in $\HH$, with
$$\beta(X)=-\langle\langle X\cdot\nu\cdot\varphi^+,\varphi^+\rangle\rangle.$$
By a direct computation, the compatibility equation  
$$d\beta(X,Y)=\beta(X)\beta(Y)-\beta(Y)\beta(X)$$
of (\ref{eqnF}) is satisfied, and equation (\ref{eqnF}) is solvable. Uniqueness is straightforward.
$\finpreuve$
\begin{rem}
Let $M$ be a minimal surface in $S^3$ and $N$ be such that
$$TM\oplus_{_\perp}\R N=TS^3.$$
For any $x\in S^3,$ denote by $\vec x=\stackrel{\rightarrow}{0x}$ the position vector of $x.$ At $x\in M,$ $\vec H=-\vec x.$ Thus, $M\subset S^3$ is represented by a solution $\varphi\in\Gamma(\Sigma)$ of
$$D\varphi=-\vec x\cdot\varphi.$$
The spinor field
$$\tilde\varphi:=(\varphi^+, N\cdot\varphi^+)$$ 
defines a surface of constant mean curvature $H=-1$ in $\Im\thmrm{m} (\HH)\simeq\R^3$. This is a classical transformation, described by B. Lawson in \cite{Lw}, and by T. Friedrich using spinors in dimension 3 in \cite{Fr}.
\end{rem}


\begin{thebibliography}{}
\bibitem{Ba} C. B\"ar, \emph{Extrinsic bounds for the eigenvalues of the Dirac operator}, Ann. Glob. Anal. Geom. \textbf{16} (1998) 573-596.
\bibitem{BFGK} H. Baum, T. Friedrich, R. Grunewald, I. Kath, \emph{Twistors and Killing spinors on Riemannian manifolds}, Teubner Texte zur Mathematik, Band 124 (1991). 
\bibitem{Fr0} T. Friedrich, \emph{On surfaces in four-space}, Ann. Glob. Analysis and Geom. \textbf{2} (1984) 257-287.
\bibitem{Fr} T. Friedrich, \emph{On the spinor representation of surfaces in Euclidean $3$-space}, J. Geom. Phys. \textbf{28} (1998) 143-157.
\bibitem{HZ} O. Hijazi and X. Zhang, \emph{Lower bounds for the eigenvalues of the Dirac operator, Part II. The submanifold Dirac operator}, Ann. Glob. Anal. Geom. \textbf{20} (2001) 163-181.
\bibitem{Ko} B.G. Konopelchenko, \emph{Weierstrass representations for surfaces in 4D spaces and their integrable deformations via DS hierarchy}, Ann. Glob. Anal. Geom. \textbf{18} (2000) 61-74. 
\bibitem{KS} R. Kusner and N. Schmidt, \emph{The spinor representation of surfaces in space}, Preprint arxiv dg-ga/9610005 (1996).
\bibitem{La} M.A. Lawn, \emph{A spinorial representation for Lorentzian surfaces in $\R^{2,1}$}, J. Geom. Phys. \textbf{58} (2008) no. 6, 683-700.
\bibitem{LR} M.A. Lawn and J. Roth, \emph{Spinorial characterization of surfaces in pseudo-Riemannian space forms}, Math. Phys. Anal. and Geom. \textbf{14} (2011) no. 3, 185-195.
\bibitem{Lw} H.B. Lawson, \emph{The global behaviour of minimal surfaces in $S^n$}, Ann. Math. \textbf{92} (1970) no. 2, 224-237.
\bibitem{Mo} B. Morel, \emph{Surfaces in $\mathbb{S}^3$ and $\mathbb{H}^3$ via spinors}, Actes du s\'eminaire de th\'eorie spectrale, Institut Fourier, Grenoble, \textbf{23} (2005) 9-22.
\bibitem{Ro} J. Roth, \emph{Spinorial characterizations of surfaces into 3-homogeneous manifolds}, J. Geom. Phys. \textbf{60} (2010) 1045-1061.
\bibitem{Ta} I. Taimanov, \emph{Surfaces of revolution in terms of solitons}, Ann. Glob. Anal. Geom. \textbf{15} (1997) 410-435.
\bibitem{Ta2} I. Taimanov, \emph{Surfaces in the four-space and the Davey-Stewartson equations}, J. Geom Phys. \textbf{56} (2006) 1235-1256. 
\bibitem{Te} K. Tenenblat, \emph{On isometric immersions of Riemannian manifolds}, Bol. Soc. Brasil. Mat. \textbf{2} (1971) no. 2, 23-36.
\end{thebibliography}
\end{document}